\newtheorem{theorem}{Theorem}[section]
\newtheorem{proposition}[theorem]{Proposition}
\newtheorem{definition}[theorem]{Definition}
\newtheorem{lemma}[theorem]{Lemma}
\newtheorem{corollary}[theorem]{Corollary}
\newtheorem{example}[theorem]{Example}
\def\C{\mathcal{C}}
\def\Z{\mathbb{Z}}
\def\F{\mathbb{F}}
\def\P{\mathcal{P}}
\def\Aut{\mathop{\mathrm{Aut}}}
\def\dev{\mathop{\mathrm{dev}}}
\begin{document}

\title{Strongly regular configurations}

\author[M.~Abreu, M.~Funk, V.~Kr\v{c}adinac, and D.~Labbate]{Mari\'{e}n Abreu$^1$, Martin Funk$^1$,
Vedran Kr\v{c}adinac$^2$, and Domenico Labbate$^1$}

\address{$^1$Dipartimento di Matematica, Informatica ed Economia,
Universit\`{a} degli Studi della Basilicata,
Viale dell'Ateneo Lucano~10, Potenza, Italy}

\address{$^2$Department of Mathematics, Faculty of Science, University of Zagreb,
Bijeni\v{c}ka cesta~30, HR-10000 Zagreb, Croatia}

\email{marien.abreu@unibas.it}
\email{martin.funk@unibas.it}
\email{vedran.krcadinac@math.hr}
\email{domenico.labbate@unibas.it}

\thanks{The third author was supported by
the Croatian Science Foundation under the projects
$6732$ and $9752$.}

\keywords{combinatorial configuration, strongly regular graph, partial geometry,
semipartial geometry}

\subjclass[2010]{05B30, 05E30}

\date{September 12, 2021}

\maketitle

\begin{abstract}
We study combinatorial configurations with the associated point and
line graphs being strongly regular. Examples not belonging to known
classes such as partial geometries and their generalizations or elliptic
semiplanes are constructed. Necessary existence conditions are proved and
a table of feasible parameters of such configurations with at most $200$
points is presented. Non-existence of some configurations with feasible
parameters is proved.
\end{abstract}

\section{Introduction}

A \emph{(combinatorial) configuration} is a finite partial linear space with constant point and line degrees.
If there are~$v$ points of degree~$r$ and~$b$ lines of degree~$k$, the parameters are written
$(v_r,b_k)$. If $v=b$, or equivalently $r=k$, the configuration is called \emph{symmetric} and
the parameters are written~$(v_k)$. Throughout the paper we assume $k\ge 3$ and $r\ge 3$.

The \emph{point graph} of a configuration has the $v$ points as vertices, with
two vertices being adjacent if the points are collinear.
The \emph{line graph} is defined dually: the $b$ lines of the configuration are the vertices,
and adjacency is concurrence.\footnote{We will always use the term \emph{line graph} in this sense,
and not in the sense of graph theory (the line graph $L(G)$ of a graph $G$).}
The point and line graphs are regular of degrees $r(k-1)$ and $k(r-1)$, respectively.
A graph is called \emph{strongly regular} with parameters
$SRG(n,d,\lambda,\mu)$ if it has~$n$ vertices, is regular of degree~$d$, and
every two vertices have $\lambda$ common neighbors if
they are adjacent and $\mu$ common neighbors if they are not
adjacent. We are interested in configurations with both associated graphs
being strongly regular.

A prominent family of such configurations are the \emph{partial
geometries} $pg(s,t,\alpha)$, introduced by
R.~C.~Bose~\cite{RCB63}. These are configurations with $r=t+1$ and
$k=s+1$ such that for every non-incident point-line pair $(P,\ell)$,
there are exactly~$\alpha$ points on $\ell$ collinear with $P$. The
point graph is a
\begin{equation}\label{pgpointgraph}
SRG\left(\frac{(s+1)(st+\alpha)}{\alpha},\, s(t+1),\, s-1+t(\alpha-1),\,\alpha(t+1)\right),
\end{equation}
and the line graph is a
\begin{equation}\label{pglinegraph}
SRG\left(\frac{(t+1)(st+\alpha)}{\alpha},\, t(s+1),\, t-1+s(\alpha-1),\, \alpha(s+1)\right).
\end{equation}
Partial geometries include Steiner $2$-designs
$pg(s,t,s+1)$ and their duals $pg(s,t,t+1)$, Bruck nets $pg(s,t,t)$~\cite{RHB51, RHB63}
and their duals $pg(s,t,s)$ (transversal designs), 
and generalized quadrangles $pg(s,t,1)$ as special cases.

If $v\neq b$, partial geometries are the only configurations with both associated graphs strongly regular.
This follows from \cite[Theorem 1.2]{BHT97}:

\begin{theorem}\label{thmBHT}
Let the point graph of a $(v_r,b_k)$ configuration be strongly regular. Then the configuration is a
partial geometry or $v\le b$. If $v=b$, then $\det(A+k I)$ is a square, where $A$ is
the adjacency matrix of the point graph.
\end{theorem}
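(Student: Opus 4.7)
Let $N\in\{0,1\}^{v\times b}$ be the point-line incidence matrix. The partial linear space axiom (two distinct points share at most one line) combined with the regularity of point and line degrees gives the fundamental identity
\[
NN^{\top}=rI+A,
\]
with $A$ the adjacency matrix of the point graph; this identity underpins the whole argument. If $v=b$, then $r=k$ (from $vr=bk$) and $N$ is a square integer matrix, so $\det(A+kI)=\det(NN^{\top})=(\det N)^{2}$ is a perfect square, settling the determinant claim.

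Now assume $v>b$; the plan is to show the configuration is a partial geometry. Then $\operatorname{rank}N\le b<v$, so $NN^{\top}$ is singular and $-r$ is an eigenvalue of $A$; being smaller than the Perron eigenvalue $r(k-1)$, this $-r$ is necessarily the smallest eigenvalue $\theta_{2}$ of the SRG $A$. The SRG identities $\theta_{1}\theta_{2}=\mu-d$ and $\theta_{1}+\theta_{2}=\lambda-\mu$ then force $\theta_{1}=k-1-\mu/r$, $\lambda-\mu=\theta_{1}-r$ and $d-\mu=r\theta_{1}$, so the SRG quadratic $A^{2}=(d-\mu)I+(\lambda-\mu)A+\mu J$ factors as $(A-\theta_{1}I)(A+rI)=\mu J$. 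Evaluating both sides on $\mathbf 1_{v}$, with $(A+rI)\mathbf 1_{v}=kr\,\mathbf 1_{v}$ and $J\mathbf 1_{v}=v\mathbf 1_{v}$, and simplifying yields the pivotal identity
\[
\mu(v-k)=kr(r-1)(k-1)\quad\text{equivalently}\quad \mu(b-r)=r^{2}(r-1)(k-1),
\]
which I label $(\star)$.

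The remaining task is to prove that, for every non-incident point-line pair $(P,\ell)$, the number $\alpha(P,\ell)$ of points of $\ell$ collinear with $P$ is a fixed constant. Since $(AN)_{P,\ell}=k-1$ for $P\in\ell$ and $(AN)_{P,\ell}=\alpha(P,\ell)$ otherwise, the identity $AN\mathbf 1_{b}=rd\,\mathbf 1_{v}$ gives $\sum_{\ell\not\ni P}\alpha(P,\ell)=r(r-1)(k-1)$, and the $(P,P)$-entry of $(AN)(AN)^{\top}=A(A+rI)A=A^{3}+rA^{2}$ equals $d(\lambda+r)$; subtracting the $r(k-1)^{2}$ contribution coming from incident pairs and using $\lambda+r-k+1=\mu(r-1)/r$ (a direct consequence of $\theta_{2}=-r$) gives $\sum_{\ell\not\ni P}\alpha(P,\ell)^{2}=\mu(r-1)(k-1)$. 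Cauchy--Schwarz on the $b-r$ lines missing $P$ then reads $\bigl(r(r-1)(k-1)\bigr)^{2}\le(b-r)\,\mu(r-1)(k-1)$, which by $(\star)$ is an \emph{equality}. Hence $\alpha(P,\ell)$ is constant in $\ell\not\ni P$, necessarily equal to the mean $\mu/r$; this constant does not depend on $P$, so the configuration is the partial geometry $pg(k-1,r-1,\mu/r)$.

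The delicate step is the second-moment reduction: passing from $(A^{3}+rA^{2})_{PP}=d(\lambda+r)$ to the clean form $\mu(r-1)(k-1)+r(k-1)^{2}$ depends crucially on the specialization $\theta_{2}=-r$. Once this and the pivotal identity $(\star)$ are in place, the Cauchy--Schwarz equality step finishes the proof immediately.
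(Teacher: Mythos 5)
Your proof is correct. The paper itself does not prove this theorem but cites \cite{BHT97}; still, all of your ingredients appear elsewhere in the paper, so the comparison is meaningful. Your first half (singularity of $NN^{\top}=rI+A$ when $v>b$, hence $-r$ is an eigenvalue, hence the SRG parameters are forced into partial-geometry form, and the derivation of $\mu(v-k)=kr(r-1)(k-1)$) is exactly the eigenvalue argument the authors reuse in the proof of Theorem~\ref{linegraph}. Where you genuinely add something is the last step: the paper delegates ``a configuration whose point graph has $pg$ parameters is a $pg$'' to Cohen's result (Lemma~\ref{cohenlemma}), whereas you reprove it on the spot by computing $\sum_{\ell\not\ni P}\alpha(P,\ell)$ and $\sum_{\ell\not\ni P}\alpha(P,\ell)^{2}$ from $AN\mathbf 1$ and $(AN)(AN)^{\top}$ and forcing equality in Cauchy--Schwarz; this is the same variance computation the authors use (fixing a line instead of a point) in Theorem~\ref{cliquecond}, so your write-up is in effect a self-contained merge of Lemma~\ref{cohenlemma} and Theorem~\ref{cliquecond}. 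Your numerical identities all check out ($\theta_1=k-1-\mu/r$, $\lambda+r-k+1=\mu(r-1)/r$, and $(\star)$ in both forms). One small point of rigor: ``smaller than the Perron eigenvalue'' only tells you $-r\in\{\theta_1,\theta_2\}$; to conclude $-r=\theta_2$ you should note that $\theta_1\theta_2=\mu-d\le 0$ forces $\theta_1\ge 0>-r$ (and the degenerate union-of-cliques case $\mu=0$ is excluded because its negative eigenvalue is $-1\ne -r$). This is a one-line fix, not a gap.
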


If $v=b$, there are such configurations that are not partial geometries. The smallest
examples are $(10_3)$ configurations with associated graphs $SRG(10,6,3,4)$ (the
complement of the Petersen graph). One such configuration is
the Desargues configuration, which is a semipartial geometry for $\alpha=2$ and
$\mu=4$ (see Section~\ref{sec3} for the definition). There is another
such configuration not belonging to the known generalizations of partial geometries
such as semipartial geometries~\cite{DT78,DV95,FDC03} and strongly regular
$(\alpha,\beta)$-geometries~\cite{HM01}, represented in Figure~\ref{smallest}.

\begin{figure}[t]
\begin{center}
\includegraphics[width=6cm]{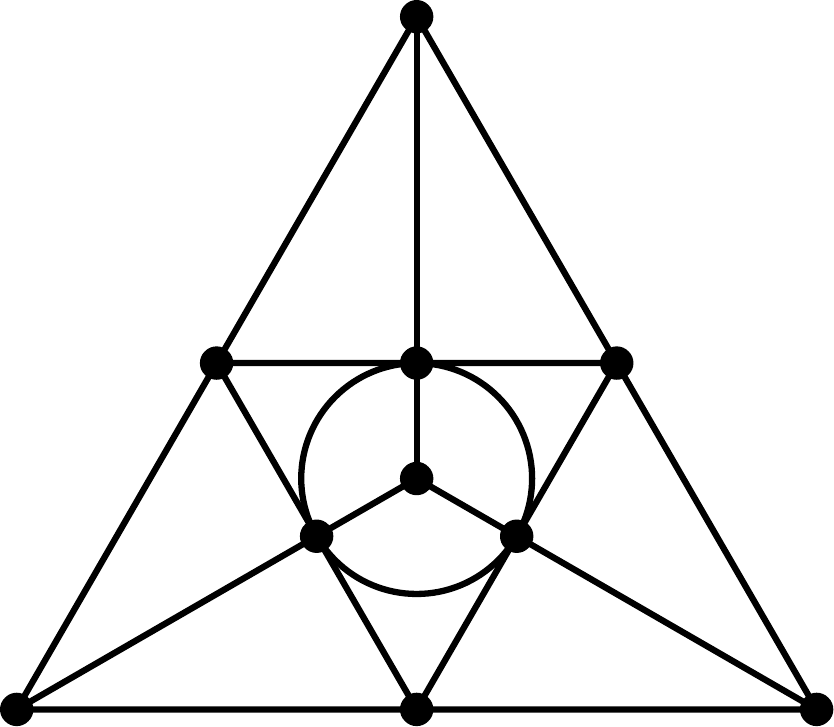}
\end{center}
\caption{A strongly regular configuration that is not an $(\alpha,\beta)$-geometry.}\label{smallest}
\end{figure}

In this paper we study combinatorial configurations similar to this one.
In Section~\ref{sec2} we give the definition of a strongly regular configuration.
The concept unifies known classes such as (semi)partial geometries and elliptic
semiplanes~\cite{PD68} with several sporadic examples from the literature (see
Remark on page 37 of~\cite{BHT97} and Section 7.2 of~\cite{BKK03}). We focus on
strongly regular configurations that are proper and primitive, not belonging to
the known classes. We prove two necessary conditions on the parameters of strongly
regular configurations, stronger than conditions on the parameters of the associated
strongly regular graphs.

In Section~\ref{sec3} we present families of strongly regular configurations.
A family associated with Moore graphs and a family constructed from the points
and planes of the finite projective space $PG(4,q)$ have the same parameters as
semipartial geometries. We prove that there are strongly regular configurations
with these parameters that are not semipartial geometries. A third family constructed
from finite projective planes has parameters not compatible with semipartial geometries.

Section~\ref{sec4} contains constructions of strongly regular configurations
from difference sets in groups. Some of the configurations from the previous
section can also be constructed in this way. We perform an exhaustive search
in groups of order $v\le 200$ and find three other parameter sets $(v_k;\lambda,\mu)$
for which strongly regular configurations exist. Configurations with a fourth
parameter set are constructed in a different manner.

In the final Section~\ref{sec5} we present a table of feasible parameters of
strongly regular configurations with $v\le 200$. An on-line version of the
table with links to the actual configurations is available on the web page
\begin{center}
\url{https://web.math.pmf.unizg.hr/~krcko/results/srconf.html}
\end{center}
We perform complete classifications of configurations with small parameters
and prove non-existence for infinitely many feasible
parameter sets corresponding to rook graphs.

\section{Definitions and conditions on the parameters}\label{sec2}

In view of the motivation presented in the Introduction, we make the following definition.

\begin{definition}\label{maindef}
A symmetric configuration will be called a \emph{strongly regular configuration}
with parameters $(v_k;\lambda,\mu)$ if the associated point graph is a
$SRG(v,k(k-1),\lambda,\mu)$.
\end{definition}

We can prove that the line graph is also strongly regular with the same para\-meters.
We use the following lemma from~\cite{AMC81}.

\begin{lemma}\label{cohenlemma}
Suppose that the point graph of a $(v_r,b_k)$ configuration is strongly regular
with parameters~\eqref{pgpointgraph} corresponding to a partial geometry $pg(s,t,\alpha)$.
Then the configuration is a $pg(s,t,\alpha)$.
\end{lemma}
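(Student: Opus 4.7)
My plan is to verify the partial geometry axiom directly: for every non-incident point-line pair $(P,\ell)$, the number $\alpha_{P,\ell}$ of points on $\ell$ collinear with $P$ equals $\alpha$. I take as given (matching the degree $r(k-1)=s(t+1)$ of the point graph together with the fact that each configuration line is a $k$-clique in the SRG, whose maximum clique size is $s+1$) that $k=s+1$ and $r=t+1$. Fix a point $P$ and write $N(P)$ for its neighborhood in the point graph. Using $vr=bk$ together with $v=(s+1)(st+\alpha)/\alpha$ read off from the SRG parameters, the number of lines not through $P$ is $b-r=(t+1)st/\alpha$. I will show by a variance argument that $\alpha_{P,\ell}$ is the constant $\alpha$ on all such lines.

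\emph{First moment.} Double-counting the incidences $(Q,\ell)$ with $Q\in\ell\not\ni P$ and $Q\in N(P)$ gives
\[
\sum_{\ell\not\ni P}\alpha_{P,\ell} \;=\; |N(P)|\cdot(r-1) \;=\; s(t+1)\,t,
\]
since each of the $s(t+1)$ neighbors of $P$ lies on $r-1=t$ lines not through $P$. Dividing by $b-r$ yields mean $\alpha$.

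\emph{Second moment.} The quantity $\sum_{\ell\not\ni P}\binom{\alpha_{P,\ell}}{2}$ counts unordered pairs $\{Q_1,Q_2\}\subseteq N(P)$ that are collinear via some line not through $P$. The induced subgraph on $N(P)$ has exactly $\tfrac{1}{2}\,s(t+1)\lambda$ edges with $\lambda=s-1+t(\alpha-1)$; among these, the pairs lying on one of the $t+1$ lines through $P$ contribute $(t+1)\binom{s}{2}$. Subtracting gives $\tfrac{1}{2}\,st(t+1)(\alpha-1)$, so the mean of $\binom{\alpha_{P,\ell}}{2}$ equals $\binom{\alpha}{2}$.

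\emph{Conclusion.} Since the first two moments coincide with those of the constant $\alpha$, the variance of $\alpha_{P,\ell}$ vanishes, so $\alpha_{P,\ell}=\alpha$ for every $\ell\not\ni P$. As $P$ was arbitrary, the configuration satisfies the partial geometry axiom and is a $pg(s,t,\alpha)$. The argument uses nothing beyond the SRG parameters and elementary incidence counting and presents no genuine obstacle; the only piece of care is the initial identification $(r,k)=(t+1,s+1)$, which is where the hypothesis ``corresponding to a partial geometry'' is consumed.
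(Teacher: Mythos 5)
Your proof is correct. Note that the paper does not prove this lemma at all -- it is quoted verbatim from Cohen~\cite{AMC81} -- so there is no internal proof to compare against; but your variance argument is precisely Cohen's original one, and it is also the same technique the paper itself deploys (in dual form, fixing a line and averaging over points) to prove Theorem~\ref{cliquecond}, whose equality case likewise characterizes partial geometries. Both moment computations check out: $\sum_{\ell\not\ni P}\alpha_{P,\ell}=st(t+1)$ and $\sum_{\ell\not\ni P}\binom{\alpha_{P,\ell}}{2}=\tfrac12 s(t+1)\bigl(\lambda-(s-1)\bigr)=\tfrac12 st(t+1)(\alpha-1)$, and with $b-r=st(t+1)/\alpha$ the variance vanishes. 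You are also right to flag the identification $(r,k)=(t+1,s+1)$ as the place where the hypothesis ``corresponding to a partial geometry'' is consumed: the degree equation $r(k-1)=s(t+1)$ together with the Hoffman clique bound $k\le s+1$ does not by itself force $k=s+1$, so this must indeed be read into the hypothesis. In the paper's only application of the lemma (inside the proof of Theorem~\ref{linegraph}) the configuration is symmetric with $s=t=k-1$ and $\alpha=\mu/k$, so the identification is automatic there and nothing is lost.
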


\begin{theorem}\label{linegraph}
Given a strongly regular $(v_k;\lambda,\mu)$ configuration, the associated line graph
is a $SRG(v,k(k-1),\lambda,\mu)$.
\end{theorem}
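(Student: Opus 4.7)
The plan is to compare the two graphs through the point-line incidence matrix. Let $N$ be the $v \times v$ point-line incidence matrix of the symmetric configuration. Because the configuration is a partial linear space with constant point and line degrees equal to $k$, the standard identities
\[
NN^T = kI + A_P, \qquad N^T N = kI + A_L,
\]
hold, where $A_P$ and $A_L$ denote the adjacency matrices of the point and line graphs. (Diagonal entries count the $k$ lines through a point, or $k$ points on a line; off-diagonal entries of $NN^T$ are $1$ exactly when the two points share a common line, i.e.\ are adjacent in the point graph, and dually for $N^T N$.)

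Next I would invoke the elementary fact that for square matrices of the same size, $NN^T$ and $N^T N$ have the same characteristic polynomial (this is the identity $\det(xI-AB)=\det(xI-BA)$, valid for square $A,B$ of equal size). Hence $A_P$ and $A_L$ are \emph{cospectral}. Since $A_P$ is the adjacency matrix of an $SRG(v,k(k-1),\lambda,\mu)$, it has exactly three distinct eigenvalues, with the top eigenvalue $k(k-1)$ having multiplicity one. Therefore $A_L$ has the same three eigenvalues with the same multiplicities; in particular, the multiplicity of $k(k-1)$ is one, so the line graph is connected.

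To close the argument, I would note that $A_L$ is regular of degree $k(k-1)$: in a symmetric $(v_k)$ configuration each line contains $k$ points and through each point pass $k$ lines, so any line is concurrent with $k(k-1)$ others. By the classical characterization, a connected regular graph with exactly three distinct eigenvalues is strongly regular, and its parameters $(n,d,\lambda,\mu)$ are determined by its spectrum. Combining this with the cospectrality of $A_P$ and $A_L$ gives that $A_L$ is an $SRG(v,k(k-1),\lambda,\mu)$, as required.

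The potentially delicate step is the appeal to ``regular graph with three eigenvalues is strongly regular''; if one prefers a self-contained derivation, one can instead start from the quadratic identity $A_P^2=(k(k-1)-\mu)I+(\lambda-\mu)A_P+\mu J$, translate it into a polynomial identity for $NN^T$, and use the fact that $NN^T$ and $N^T N$ satisfy the same minimal polynomial (again because they have the same characteristic polynomial) to recover the matching quadratic identity for $A_L$, after verifying that $A_L J = k(k-1) J$. Either route yields the same conclusion.
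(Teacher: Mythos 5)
Your argument is correct in the cases that matter and takes a genuinely different route from the paper. The paper also works with $N$, writing $A_P=NN^t-kI$ and $A_L=N^tN-kI$, but then splits into two cases: if $N$ is non-singular it conjugates the strongly-regular identity for $A_P$ by $N$ (using $N^{-1}J=\frac1k J$ and $JN=kJ$) to obtain the identity for $A_L$ directly; if $N$ is singular it observes that $A_P$ then has eigenvalue $-k$, which forces the parameters into the partial-geometry pattern, and invokes Cohen's lemma to conclude that the configuration is a partial geometry, whence the line graph is strongly regular. Your route via cospectrality of $NN^T$ and $N^TN$ (from $\det(xI-AB)=\det(xI-BA)$) plus the characterization of connected regular graphs with three eigenvalues is uniform --- it never needs to distinguish the singular case --- and avoids the external input of Cohen's lemma entirely, which is a real simplification. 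What it does not deliver is the by-product the paper extracts from its case split and uses immediately afterwards: that configurations with singular incidence matrix (the ``improper'' ones) are necessarily partial geometries.

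One caveat you should patch: your assertion that $A_P$ has exactly three distinct eigenvalues with $k(k-1)$ of multiplicity one fails precisely in the imprimitive cases the paper's definition still admits, namely a complete point graph (projective planes) and a disconnected one ($\mu=0$, disjoint unions of projective planes), where there are only two distinct eigenvalues and, in the second case, the top multiplicity exceeds one. These cases are easy to dispose of separately --- a regular graph cospectral with a disjoint union of equal cliques, or with a complete graph, is again such a union or complete, since $(A-\theta I)$ then has rank constraints forcing the block structure --- but as written your proof silently assumes primitivity, so add a sentence handling them.
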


\begin{proof}
A graph is strongly regular with parameters $SRG(v,d,\lambda,\mu)$ if and only if its adjacency matrix~$A$
satisfies
\begin{equation}\label{adjsrg}
A^2= dI+\lambda A+\mu (J-I-A).
\end{equation}
Here $I$ and $J$ are the $v\times v$ identity matrix and the
all-one matrix. Let~$N$ be the incidence matrix of the configuration. Then, $A=N N^t-k I$ and $B=N^t N-k I$
are adjacency matrices of the point and line graphs, respectively.
By~\eqref{adjsrg}, we have
\begin{equation*}
N N^t N N^t +(\mu-\lambda-2k)N N^t+(k(\lambda-\mu+1)+\mu)I - \mu
J=0.
\end{equation*}
If the incidence matrix~$N$ is non-singular, we can multiply by $N^{-1}$ from the left
and by $N$ from the right. Using $N^{-1} J = \frac{1}{k} J$ and $J N=k J$, we get
\begin{equation*}
N^t N N^t N + (\mu-\lambda-2k)N^t N +(k(\lambda-\mu+1)+\mu)I - \mu
J=0.
\end{equation*}
This is equation~\eqref{adjsrg} for the matrix~$B$, and therefore the line graph is
also strongly regular with the same parameters.

Now assume that the incidence matrix $N$ is singular. Then the matrix~$N$ has eigenvalue~$0$
and the matrix~$A$ has eigenvalue $-k$. Thus, $k^2-(\mu-\lambda)k+\mu-k(k-1)=0$ holds
and~$k$ divides~$\mu$. Denoting $\alpha=\mu/k$, we see that the parameters of the point graph
correspond to a partial geometry~\eqref{pgpointgraph}. By Lemma~\ref{cohenlemma}, the
configuration is a partial geometry and the line graph is also strongly regular with
parameters~\eqref{pglinegraph}. The same argument was used in the proof of
\cite[Theorem 1.2]{BHT97}.
\end{proof}

We shall call strongly regular configurations with non-singular incidence matrices \emph{proper}.
The previous proof shows that improper configurations must be partial
geometries. The parameters of a strongly regular configuration are not independent. A
necessary condition for the existence of a $SRG(v,k(k-1),\lambda,\mu)$ is
\begin{equation*}
(v-1-k(k-1))\mu=k(k-1)(k(k-1)-1-\lambda).
\end{equation*}
From this, $v$ can be expressed from $k$, $\lambda$, and~$\mu$, provided $\mu\neq 0$.
There are many other necessary conditions on the parameters
of a $SRG(v,k(k-1),\lambda,\mu)$. The adjacency matrix has eigenvalue $k(k-1)$ with
multiplicity~$1$ and two more eigenvalues
\begin{equation}\label{eigenv}
r,s=\frac{1}{2}\left(\lambda-\mu\pm \sqrt{(\lambda - \mu)^2-4(\mu-k(k-1))} \right)
\end{equation}
with respective multiplicities
\begin{equation}\label{multi}
f,g = \frac{1}{2} \left(v-1\mp \frac{(r+s)(v-1)+2k(k-1)}{r-s}\right).
\end{equation}
The multiplicities are integers, giving divisibility conditions on the para\-meters. If $f\neq g$, the
eigenvalues $r$, $s$ are also integers. See~\cite{BvM21} for further necessary conditions
and~\cite{AEB} for tables of para\-meters of strongly regular graphs with up-to-date
information on their existence. The parameters $(v_k;\lambda,\mu)$ of a strongly regular
configuration will be considered \emph{feasible} if the associated strongly regular graphs
exist or their existence cannot be ruled out. On top of that, we assume two more necessary
conditions on the parameters. The first condition follows from \cite[Theorem 1.2]{BHT97}:
$\det(A+kI)=\det(N N^t)=(\det N)^2$ is a square. The matrix $A+kI$ has
eigenvalues $k^2$, $r+k$, $s+k$ with multiplicities $1$, $f$, $g$ and the determinant can be
computed from the parameters.

\begin{proposition}\label{squarecond}
If a strongly regular $(v_k;\lambda,\mu)$ configuration exists, then $(r+k)^f (s+k)^g$ is the square
of an integer, where $r$, $s$, $f$, $g$ are given by \eqref{eigenv} and \eqref{multi}.
\end{proposition}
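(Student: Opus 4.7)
The plan is to compute $\det(A+kI)$ in two different ways and equate them. The first computation uses the identity $A+kI = NN^t$, where $N$ is the $v\times v$ incidence matrix of the symmetric configuration (this is noted in the proof of Theorem~\ref{linegraph} above and is also the key observation underlying Theorem~\ref{thmBHT}). Since $N$ has integer entries, we immediately get
$$\det(A+kI) = \det(NN^t) = (\det N)^2,$$
which is automatically the square of an integer, regardless of whether $N$ is singular.

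The second computation extracts $\det(A+kI)$ from the spectrum of $A$. Because the point graph is an $SRG(v,k(k-1),\lambda,\mu)$, its adjacency matrix has eigenvalues $k(k-1)$, $r$, $s$ with multiplicities $1$, $f$, $g$ given by \eqref{eigenv} and \eqref{multi}. Shifting by $kI$ shifts each eigenvalue by $k$ and preserves multiplicities, so
$$\det(A+kI) = k^2\,(r+k)^f\,(s+k)^g.$$
Equating the two expressions gives $k^2(r+k)^f(s+k)^g = (\det N)^2$, and cancelling the obvious square $k^2$ yields the claim.

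There is essentially no obstacle; the proof is bookkeeping. One small point worth a sentence is that when $f = g$, the eigenvalues $r,s$ are conjugate quadratic irrationals, in which case $(r+k)^f(s+k)^g = \bigl((r+k)(s+k)\bigr)^f$ is manifestly rational, so the square condition remains a genuine arithmetic constraint. The improper case $\det N = 0$ is covered trivially, since then both sides vanish.
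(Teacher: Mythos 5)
Your proof is correct and follows exactly the paper's own argument: the paper justifies the proposition by noting $\det(A+kI)=\det(NN^t)=(\det N)^2$ and computing the same determinant spectrally as $k^2(r+k)^f(s+k)^g$. The only micro-detail you gloss over is why cancelling $k^2$ leaves the square of an \emph{integer} (e.g.\ because $(r+k)^f(s+k)^g$ is a rational algebraic integer equal to the square of the rational $\det N/k$), but this is routine and the paper does not belabour it either.
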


For example, the condition rules out strongly regular $(28_4;6,4)$ configurations, although
$SRG(28,12,6,4)$ graphs exist. Equations \eqref{eigenv} and \eqref{multi} give $r=4$, $s=-2$,
$f=7$, $g=20$, and $2^{41}$ is not a square. The second condition follows from a counting argument.

\begin{theorem}\label{cliquecond}
If a strongly regular $(v_k;\lambda,\mu)$ configuration exists, then
$(v-k)(\lambda+1)\ge k(k-1)^3$. Equality holds if and only if the
configuration is a partial geometry.
\end{theorem}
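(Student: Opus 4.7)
The plan is a double-counting argument followed by the Cauchy--Schwarz inequality. I would fix a point $P$ and, for each of the $v-k$ lines $\ell$ not containing $P$, write $\alpha_\ell(P)$ for the number of points of $\ell$ collinear with $P$. The strategy is to compute $\sum_\ell \alpha_\ell(P)$ and $\sum_\ell \alpha_\ell(P)^2$ separately, and then invoke $(v-k)\sum_\ell \alpha_\ell(P)^2 \geq \bigl(\sum_\ell \alpha_\ell(P)\bigr)^2$.

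For the first sum I would count incidences $(Q,\ell)$ with $Q$ a neighbor of $P$ in the point graph and $\ell$ a line through $Q$ missing $P$. Since $P$ has $k(k-1)$ neighbors and each such $Q$ lies on exactly $k-1$ lines not through $P$ (the unique line $PQ$, which exists by partial linearity, being excluded), this gives $\sum_\ell \alpha_\ell(P) = k(k-1)^2$.

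For the second moment I would instead evaluate $\sum_\ell \alpha_\ell(P)(\alpha_\ell(P)-1)$, counting ordered pairs $(Q,Q')$ of distinct neighbors of $P$ that share a line missing $P$. Among all $k(k-1)\lambda$ ordered pairs of distinct adjacent neighbors of $P$ (using the definition of $\lambda$), exactly $k(k-1)(k-2)$ come from pairs lying with $P$ on a common line: choose one of $k$ lines through $P$ and then an ordered pair from its $k-1$ remaining points. Subtracting yields $\sum_\ell \alpha_\ell(P)(\alpha_\ell(P)-1) = k(k-1)(\lambda-k+2)$, and therefore $\sum_\ell \alpha_\ell(P)^2 = k(k-1)(\lambda+1)$.

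Substituting into Cauchy--Schwarz gives $(v-k)\,k(k-1)(\lambda+1) \geq k^2(k-1)^4$, which is the claimed bound after dividing by $k(k-1)$. Equality forces $\alpha_\ell(P)$ to be constant over all lines $\ell$ not through $P$, and its common value $k(k-1)^2/(v-k)$ depends only on $v$ and $k$, hence is independent of $P$; this is exactly the partial geometry axiom with $s=t=k-1$ and $\alpha=k(k-1)^2/(v-k)$. Conversely, for a partial geometry the formulas~\eqref{pgpointgraph} give $v-k=k(k-1)^2/\alpha$ and $\lambda+1=(k-1)\alpha$, so equality holds by direct substitution. The step requiring most care is the bookkeeping in the second moment, specifically correctly isolating the contribution of pairs $(Q,Q')$ that are collinear with $P$ via a line through $P$; the remainder is routine.
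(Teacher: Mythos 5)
Your proof is correct and is essentially the paper's argument in dual form: the paper fixes a line $\ell$ and computes $\sum_P \alpha_P$ and $\sum_P \alpha_P(\alpha_P-1)$ over points $P\not\in\ell$, obtaining the same two quantities $k(k-1)^2$ and $k(k-1)(\lambda-k+2)$, and then bounds $\sum_P(\alpha_P-\alpha)^2\ge 0$, which is exactly your Cauchy--Schwarz step. Since the configuration is symmetric, the two formulations are interchangeable, and your equality analysis (constancy of the counts forcing the partial geometry axiom) matches the paper's as well.
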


\begin{proof}
Fix a line $\ell$ and for any point $P$ not on
$\ell$, denote by~$\alpha_P$ the number of lines through $P$ concurrent with~$\ell$.
Count the number of flags $(P,\ell_1)$ with $\ell_1$ concurrent with $\ell$ in
two ways to obtain
$$\sum \alpha_P = k(k-1)^2.$$
Similarly, counting triples $(P,\ell_1,\ell_2)$, where $\ell_1\neq\ell_2$
are lines through~$P$ concurrent with $\ell$, gives
$$\sum \alpha_P(\alpha_P-1) = k(k-1)(\lambda-(k-2)).$$
The sums are taken over all $P\not\in \ell$. The average
$\alpha_P$ is $\alpha= \frac{k(k-1)^2}{v-k}$. Now we can compute
\begin{align*}
0 &\le \sum (\alpha_P-\alpha)^2 =  \sum \alpha_P(\alpha_P-1) +
(1-2\alpha) \sum \alpha_P + (v-k)
\alpha^2 = \\
 & = k(k-1)(\lambda-k+2) + \left(1-2\,\frac{k(k-1)^2}{v-k}\right)k(k-1)^2 +
 \frac{k^2(k-1)^4}{v-k} = \\
 & = k(k-1)\left( \lambda+1 - \frac{k(k-1)^3}{v-k}\right).
\end{align*}
From this we see that $(v-k)(\lambda+1)\ge k(k-1)^3$ holds, with equality
if and only if $\alpha_P=\alpha$ for all $P\not\in \ell$, i.e.\
the configuration is a partial geometry.
\end{proof}

For example, the parameters $(81_5;1,6)$ do not satisfy
Theorem~\ref{cliquecond} and strongly regular configurations with these parameters
don't exist, although a $SRG(81,20,1,6)$ graph does. An equivalent form
of the inequality $k(\mu-\lambda-1)\le \mu$ follows from Hoffman's bound
on the size of cliques in strongly regular graphs; see \cite[Section
1.3]{BCN89}. Theorem~\ref{cliquecond} characterizes proper
strongly regular configurations by their parameters.

\begin{corollary}
A strongly regular $(v_k;\lambda,\mu)$ configuration that is not a projective plane is proper if and only if
$(v-k)(\lambda+1) > k(k-1)^3$.
\end{corollary}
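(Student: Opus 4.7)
The plan is to reduce the corollary to a biconditional about partial geometries. By Theorem~\ref{cliquecond}, the strict inequality $(v-k)(\lambda+1)>k(k-1)^3$ is equivalent to the configuration not being a partial geometry, so it suffices to prove that a strongly regular configuration which is not a projective plane is proper if and only if it is not a partial geometry.

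One direction is already contained in the proof of Theorem~\ref{linegraph}: if the incidence matrix $N$ is singular, then $\mu=\alpha k$ for an integer $\alpha$ and, by Lemma~\ref{cohenlemma}, the configuration is a partial geometry. The contrapositive gives ``not a partial geometry $\Rightarrow$ proper'', and this direction does not need the projective-plane hypothesis.

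The opposite implication requires the complementary statement that a proper symmetric partial geometry must be a projective plane. I would prove this by an eigenvalue computation. A symmetric partial geometry is a $pg(s,s,\alpha)$ with $k=s+1$ and $\mu=\alpha(s+1)=\alpha k$. Substituting into~\eqref{eigenv} produces a perfect-square discriminant $(\alpha-(2s+1))^2$, and the two non-principal eigenvalues of the point graph come out to $s-\alpha$ and $-k$. Properness of the configuration is equivalent to non-singularity of $NN^t=kI+A$, hence to $-k$ not actually occurring in the spectrum of $A$, hence to its formal multiplicity $g$ from~\eqref{multi} being zero. But $g=0$ leaves the point graph with only two distinct adjacency eigenvalues, which forces it to be the complete graph $K_v$; then every two points are collinear, the configuration is a Steiner~$2$-design, and since it is symmetric it must be a projective plane.

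The only delicate point is the very last step: showing that $g=0$ forces the point graph to collapse to $K_v$, equivalently to $\alpha=s+1$. Once that is in place, the two implications combine, via Theorem~\ref{cliquecond}, to give the corollary.
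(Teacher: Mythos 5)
Your proof is correct and follows the route the paper intends: the corollary is exactly the combination of Theorem~\ref{cliquecond} (strict inequality $\Leftrightarrow$ not a partial geometry) with the observation from the proof of Theorem~\ref{linegraph} that a singular incidence matrix forces a partial geometry, plus the converse fact that a proper symmetric partial geometry must be a projective plane. The paper states the corollary without proof, leaving that last fact implicit in the surrounding remarks; your eigenvalue computation (discriminant $(\alpha-(2s+1))^2$, eigenvalues $s-\alpha$ and $-k$, and $g=0$ forcing a complete point graph since a connected regular graph with two distinct eigenvalues is complete) correctly supplies the missing detail.
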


Projective planes of order~$n$ are partial geometries $pg(n,n,n+1)$ and satisfy
Theorem~\ref{cliquecond} with equality, but have non-singular incidence matrices. The associated
point and line graphs are complete. More generally, we now consider the
case when the associated graphs are imprimitive, i.e.\ $\mu=0$ or $\mu=k(k-1)$ holds. In
the first case the graphs are disjoint unions of complete graphs $m\cdot K_{n^2+n+1}$
and the configuration is a disjoint union of $m$ projective planes of order~$n$. This case
can be characterized as strongly regular configurations with collinearity of points being
an equivalence relation.

The second imprimitive case $\mu=k(k-1)$ is complementary: non-collinearity of points
is an equivalence relation and the associated graphs are complete multipartite. Strongly
regular configurations with these properties are known as \emph{elliptic semiplanes}.
Dembowski~\cite{PD68} defined a \emph{finite semiplane} as a partial
linear space with parallelism of lines and non-collinearity of
points being equivalence relations. A semiplane is of \emph{order} $n$
if the largest degree of a point or line is $n+1$. Dembowski proved
that the set of all degrees is either $\{n-1,n,n+1\}$, $\{n,n+1\}$,
or $\{n+1\}$, and called semiplanes \emph{hyperbolic},
\emph{parabolic}, or \emph{elliptic} accordingly.
Elliptic semiplanes are precisely the strongly regular configurations with $\mu=k(k-1)$.
Most known elliptic semiplanes are of the form $\mathcal{P}-B$, where $\P$ is a
projective plane of order $n$, and $B$ is a closed Baer subset. The
only known exceptions are Baker's semiplane~\cite{RDB78} with parameters
$(45_7;39,42)$ and Mathon's semiplane~\cite{RM07} with
parameters $(135_{12};129,132)$.

In the sequel we focus on strongly regular configurations that are
proper and \emph{primitive}, i.e.\ such that neither collinearity nor
non-col\-li\-ne\-arity of points are equivalence relations. This is equivalent
with $0< \mu < k(k-1)$. Table~\ref{fptable} contains all feasible parameters
of such configurations with $v\le 200$. We first present constructions of proper
and primitive strongly regular configurations.

\section{Families of strongly regular configurations}\label{sec3}

An \emph{$(\alpha,\beta)$-geometry}~\cite{DV94} is a $(v_r,b_k)$ configuration such that
for every non-incident point-line pair $(P,\ell)$, there are
either $\alpha$ or $\beta$ points on~$\ell$ collinear with~$P$.
Thus, a partial geometry is an $(\alpha,\beta)$-geometry with $\alpha=\beta$.
If $\alpha\neq \beta$, the point graph is not necessarily
strongly regular. Geometries with this additional property are called
\emph{strongly regular $(\alpha,\beta)$-geometries} and are studied in~\cite{HM01}.
An important special case are the \emph{semipartial geometries}, introduced in~\cite{DT78}.
They are $(0,\alpha)$-geometries such that for every pair of non-collinear points, there
are exactly~$\mu$ points collinear with both. The parameters are written $(s,t,\alpha,\mu)$,
where $r=t+1$ and $k=s+1$ are the point and line degrees, and the point graph is a
$$SRG\left(1+\frac{s(t+1)(\mu+t(s+1-\alpha)}{\mu},\,s(t+1),\,s-1+t(\alpha-1),\,\mu\right).$$

Strongly regular $(\alpha,\beta)$-geometries with $v=b$ are strongly regular configurations
by Definition~\ref{maindef}. Our introductory example in Figure~\ref{smallest} is
not an $(\alpha,\beta)$-geometry, although the parameters 
correspond to a semipartial geometry. If $\ell$ is the line represented as a circle, there are points~$P$ ouside~$\ell$
with $1$, $2$, or $3$ points on~$\ell$ collinear with~$P$. This example is part of a family associated with Moore
graphs of diameter two, i.e.\ strongly regular graphs with $\lambda=0$ and $\mu=1$.

Moore graphs have parameters $SRG(k^2+1,k,0,1)$ with
$k\in \{2,3,7,$ $57\}$ \cite{HS60}. There is a unique graph for $k=2$ (the pentagon),
$k=3$ (the Petersen graph), and $k=7$ (the Hoffman-Sigleton graph), while
for $k=57$ the existence of such a graph is unknown. The incidence structure with points
being vertices of a $SRG(k^2+1,k,0,1)$ and lines being neighborhoods of single vertices
is a semipartial geo\-metry with $s=t=\alpha=k-1$ and $\mu=(k-1)^2$~\cite{DT78}. The point
graph is the complementary $SRG(k^2+1,k(k-1),k(k-2),(k-1)^2)$. Hence, this incidence
structure is a strongly regular $(v_k;\lambda,\mu)$ configuration
with $v=k^2+1$, $\lambda=k(k-2)$, and $\mu=(k-1)^2$.

For $k=3$, the semipartial geometry
is the Desargues configuration and there is one other $(10_3;3,4)$ configuration given in
Figure~\ref{smallest}. For $k=7$, the semipartial geometry has
full automorphism group $PSU(3,5) : \Z_2$ of order $252000$ acting flag-transitively.
We found $210$ other $(50_7;35,36)$ configurations that are not $(\alpha,\beta)$-geometries.
The semipartial geometry and $110$ of the new examples are self-dual and the remaining ones
form $50$ dual pairs.

\begin{proposition}\label{src50}
There are at least $211$ non-isomorphic $(50_7;35,36)$
configurations, one of which is a semipartial geometry.
Orders of their full automorphism
groups are given in Table~\ref{table50}.
\end{proposition}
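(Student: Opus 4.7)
The plan is to perform a computer enumeration. By Theorem~\ref{thmBHT} and the uniqueness of $SRG(50,7,0,1)$, the point graph of any $(50_7;35,36)$ configuration is the complement of the Hoffman-Singleton graph $HS$. Accordingly, each line of such a configuration is a $7$-element independent set (a \emph{heptad}) in $HS$, and the whole configuration corresponds to a decomposition of the edge set of $\overline{HS}$ into $50$ edge-disjoint $7$-cliques. The arithmetic checks out: each heptad contributes $\binom{7}{2}=21$ non-edges of $HS$, and $\overline{HS}$ has $\frac{50\cdot 42}{2}=1050$ edges, so exactly $50$ heptads are needed with every vertex covered $7$ times.

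The first step is to list all heptads of $HS$; the second is a backtracking search through $50$-subsets of this list that form such a decomposition. To cut the search tree, I would fix an initial heptad (which is unique up to $\Aut(HS)=PSU(3,5):\Z_2$ of order $252000$), then at each subsequent step pick an uncovered non-edge and branch over the heptads that cover it, pruning as soon as the $(v,k,1)$-packing constraint fails or a vertex accumulates more than $7$ incidences. Partial symmetry from the stabilizer of the initial heptad can be used for orbit-type pruning. Each configuration produced by the search is then canonicalised with \texttt{nauty} applied to its Levi graph (with the point/line colour classes preserved), which simultaneously yields an isomorphism invariant and the full automorphism group.

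Given the $211$ pairwise non-isomorphic configurations output by the search, one identifies the semipartial geometry as the configuration whose lines are the neighbourhoods of the $50$ vertices of $HS$, and verifies that its automorphism group is the full $PSU(3,5):\Z_2$ acting flag-transitively. Self-duality of the remaining examples is tested by comparing each canonical form with that of its dual, obtained by interchanging the two colour classes in the Levi graph; this produces the split into $110$ self-dual examples and $50$ dual pairs. The resulting multiset of automorphism group orders fills Table~\ref{table50}.

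The main obstacle is controlling the size of the search tree: without careful use of the high symmetry of $HS$ and aggressive propagation of the packing constraints, the enumeration rapidly becomes infeasible. Everything else, including group computation and duality testing, is routine once the list of $211$ configurations is in hand.
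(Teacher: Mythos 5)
Your reduction of the problem is sound: by Definition~\ref{maindef} the point graph is a $SRG(50,42,35,36)$, such a graph is unique (the complement of the Hoffman--Singleton graph), and the lines must form $50$ seven-cliques of the complement pairwise meeting in at most one vertex, which by your edge count is the same as an edge decomposition of the complement into $7$-cliques. (The appeal to Theorem~\ref{thmBHT} is not really what is needed here --- only the uniqueness of the strongly regular graph is used.) The isomorphism and duality testing via canonical forms of the coloured Levi graph is also exactly what the paper does.

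The genuine gap is the central step: the exhaustive backtracking search over heptad decompositions that your argument hinges on is precisely the computation the authors report to be out of reach. The complement of the Hoffman--Singleton graph has $2\,708\,150$ cliques of size $7$, and the paper states explicitly that the $50$-cliques of the resulting clique graph could not be classified; correspondingly, the entry $211$ in Table~\ref{fptable} is \emph{not} boldface, i.e.\ it is only a lower bound, and the paper says outright that further $(50_7;35,36)$ configurations may exist. Your sentence ``given the $211$ pairwise non-isomorphic configurations output by the search'' therefore assumes both that the search terminates and that its output is $211$, which presupposes the conclusion and in fact claims a completeness that the proposition deliberately avoids. What the paper actually does is a targeted, non-exhaustive construction: it prescribes automorphism groups (using Cliquer to complete the orbit structures, as for quasi-symmetric designs) and then generates further examples by switching $2\times 2$ submatrices of the incidence matrix, with nauty used afterwards for isomorphism rejection, automorphism groups and duality. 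To repair your argument you would either need to make the exhaustive search actually run to completion (in which case you would be proving a stronger, currently open exact count), or replace it by some explicit procedure guaranteed to produce at least $211$ pairwise non-isomorphic decompositions.
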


\begin{table}[h]
\begin{tabular}{|c c|c c|c c|c c|c c|}
\hline
$|\Aut|$ & \#Cf & $|\Aut|$ & \#Cf & $|\Aut|$ & \#Cf & $|\Aut|$ & \#Cf & $|\Aut|$ & \#Cf \\
\hline
252000 & 1 & 120 & 1 & 40 & 1 & 20 & 6 & 6 & 13 \\
2520 & 1 & 96 & 1 & 36 & 1 & 16 & 3 & 4 & 15 \\
1440 & 1 & 72 & 1 & 32 & 1 & 12 & 1 & 3 & 18 \\
720 & 1 & 48 & 1 & 24 & 6 & 10 & 1 & 2 & 46 \\
240 & 1 & 42 & 1 & 21 & 2 & 8 & 11 & 1 & 76 \\
\hline
\end{tabular}
\vskip 3mm \caption{Distribution of $(50_7;35,36)$ configurations by
order of full automorphism group.}\label{table50}
\end{table}

The configurations of Proposition~\ref{src50} are
available through the on-line version of Table~\ref{fptable}. They were
constructed computationally, by prescribing automorphism groups and
switching submatrices of the incidence matrix:
$$\left[\begin{array}{c c} 1 & 0\\ 0 & 1\\ \end{array}\right] \longleftrightarrow
\left[\begin{array}{c c} 0 & 1\\ 1 & 0\\ \end{array}\right].$$
We used GAP~\cite{GAP4} and our own programs written in C. To check for
isomorphism and compute full automorphism groups, we used nauty~\cite{MP14}.
The construction method for configurations with prescribed automorphism groups
is similar to constructions of quasi-symmetric designs in~\cite{KVK20, VK20} and
relies on the clique-finding program Cliquer~\cite{NO03}.

Another family of semipartial geometries is family (g) from~\cite{DT78}, denoted by $LP(n,q)$ in~\cite{DV95,FDC03}.
The points of $LP(n,q)$ are lines of the projective space $PG(n,q)$, $n\ge 3$. The lines of $LP(n,q)$ are
$2$-planes of $PG(n,q)$ and incidence is inclusion. Then, $LP(n,q)$ is a semipartial geometry with $s=q(q+1)$, $t=\frac{q^{n-1}-1}{q-1}-1$,
$\alpha=q+1$, and $\mu=(q+1)^2$. It is a partial geometry if and only if $n=3$. Moreover,
$v=b$ holds if and only if $n=4$. Thus, $LP(4,q)$ is a $(v_k;\lambda,\mu)$ configuration with
\begin{equation}\label{LPpar}
\begin{array}{c}
v=(q^2+1)(q^4+q^3+q^2+q+1), \kern 2mm k=q^2+q+1,\\[2mm]
\lambda=q^3+2q^2+q-1, \kern 4mm \mu=(q+1)^2.
\end{array}
\end{equation}
It is self-dual and has full automorphism group $P\Gamma L(5,q)$.

We now describe transformations of $LP(4,q)$ into strongly regular configurations that are not semipartial
geometries. We refer to them as \emph{polarity transformations}; they are similar to the construction of
polarity designs in~\cite{JT09}. Let~$H_0$ be a hyper\-plane
of $PG(4,q)$. As a subgeometry, $H_0$ is isomorphic to $PG(3,q)$ and admits a polarity~$\pi$, i.e.\ an inclusion-reversing
involution. The polarity permutes the set of projective lines contained in $H_0$ and exchanges the set of points in~$H_0$ with
the set of planes in~$H_0$. We modify incidence
of the elements of $LP(4,q)$ contained in~$H_0$: a point~$L$ (projective line contained in $H_0$) is
incident with a line $p$ (projective plane contained in $H_0$) if $\pi(L)\subseteq p$. For the remaining pairs
$(L,p)$, with $L$ or $p$ not contained in $H_0$, incidence remains unaltered. We claim that the
new incidence structure $LP(4,q)^\pi$ is a $(v_k;\lambda,\mu)$ configuration with parameters~\eqref{LPpar}.

The point and line degrees clearly remain the same and there is at most one line through every pair of
points. The point graphs of $LP(4,q)^\pi$ and $LP(4,q)$ are identical. This follows from the next lemma.

\begin{lemma}\label{coplanarity}
Two projective lines of $PG(n,q)$ are coplanar if and only if they intersect.
\end{lemma}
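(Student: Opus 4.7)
The plan is to translate the statement into linear algebra via the standard correspondence between projective subspaces of $PG(n,q)$ and vector subspaces of $\F_q^{n+1}$, and then read everything off from the Grassmann dimension formula.

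First I would establish the easy direction. If two projective lines $L_1$ and $L_2$ lie in a common plane $\Pi$, then either they coincide (and trivially share all their points) or, as two distinct lines in the projective plane $\Pi$, they meet in exactly one point by the standard incidence axiom of a projective plane. Either way, they intersect.

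For the converse, I would represent $L_i$ as the projectivization of a 2-dimensional subspace $U_i \le \F_q^{n+1}$, and a projective plane as the projectivization of a 3-dimensional subspace. The Grassmann formula gives
\begin{equation*}
\dim(U_1+U_2) = \dim U_1 + \dim U_2 - \dim(U_1\cap U_2) = 4 - \dim(U_1\cap U_2).
\end{equation*}
If $L_1$ and $L_2$ share a projective point, then $\dim(U_1\cap U_2)\ge 1$, so $\dim(U_1+U_2)\le 3$. Choosing any 3-dimensional subspace containing $U_1+U_2$ yields a projective plane containing both lines.

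There is no real obstacle here; the only thing to be slightly careful about is the degenerate case $L_1=L_2$, which is handled uniformly by the dimension count, and the remark that when $\dim(U_1+U_2)<3$ one simply enlarges $U_1+U_2$ to a 3-dimensional subspace (possible since $n+1\ge 3$, i.e.\ we are in $PG(n,q)$ with $n\ge 2$, which is the only case where the statement is meaningful).
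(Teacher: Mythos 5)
Your argument is correct: the Grassmann formula $\dim(U_1+U_2)=4-\dim(U_1\cap U_2)$ immediately gives a $3$-dimensional subspace containing both lines when they share a point, and the converse is the standard incidence axiom for a projective plane. The paper states Lemma~\ref{coplanarity} without proof, treating it as a well-known fact about $PG(n,q)$, so there is no authorial argument to compare against; your linear-algebra derivation is the standard one and fills the gap cleanly, including the degenerate case $L_1=L_2$ and the requirement $n\ge 2$ (satisfied in all uses in the paper, where $n\ge 3$).
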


If $L_1$ and $L_2$ are projective lines of~$H_0$, then $\pi(L_1)$, $\pi(L_1)$ are contained in a
plane~$p$ if and only if $L_1$, $L_2$ intersect in the point $\pi(p)$ and hence, by Lemma~\ref{coplanarity},
are contained in some plane~$p'$. The line graph of $LP(4,q)^\pi$ is changed, but remains strongly regular because
of Theorem~\ref{linegraph}.

To see that the new configuration $LP(4,q)^\pi$ is not a semipartial geometry, take a plane $p$ in $H_0$
and a projective line $L$ intersecting~$H_0$ in the point $\pi(p)$. Then, $(L,p)$ is
a non-incident point-line pair of $LP(4,q)^\pi$. If $\pi(M)\subseteq p$, then $M$ contains $\pi(p)$ and is
coplanar with~$L$, i.e.\ collinear as a point of the configuration. Hence, all $q^2+q+1$ points on $p$ are
collinear with $L$, whereas in a semipartial geometry the number is always $0$ or $\alpha=q+1$. The configurations
$LP(4,q)$ and $LP(4,q)^\pi$ are therefore not isomorphic. Configurations obtained by transforming $LP(4,q)$
with different polarities are all isomorphic, because the composition of two polarities is an isomorphism.

We define a dual transformation of $LP(4,q)$ in the following manner. Take a point $P_0$ of $PG(4,q)$ and
consider the quotient geometry of lines, planes and solids containing $P_0$. It is isomorphic to $PG(3,q)$
and admits a polarity~$\pi'$ permuting the planes through~$P_0$ and exchanging the lines and solids
through~$P_0$. We modify incidence in $LP(4,q)$ for projective lines $L$ and planes $p$ through $P_0$:
they are incident if $L\subseteq \pi'(p)$. The new configuration $LP(4,q)_{\pi'}$ is isomorphic to the
dual of $LP(4,q)^\pi$ and therefore strongly regular with
parameters~\eqref{LPpar}, but not a semipartial geometry. The line graphs of $LP(4,q)_{\pi'}$ and $LP(4,q)$
are identical, while the point graph of $LP(4,q)_{\pi'}$ is changed.

A fourth $(v_k;\lambda,\mu)$ configuration is obtained if we take a non-incident point-hyperplane pair
$P_0$, $H_0$ of $PG(4,q)$ and apply both transformations. The lines and planes in $H_0$ are different from
the lines and planes through~$P_0$, so incidence is changed in disjoint parts of the configuration. The resulting
configuration $LP(4,q)_{\pi'}^\pi$ has the same line graph as $LP(4,q)^\pi$ and the same point graph as
$LP(4,q)_{\pi'}$ and is self-dual. This proves the following theorem.

\begin{theorem}\label{LPtransformed}
For every prime power~$q$, there are at least four strongly regular $(v_k;\lambda,\mu)$ configuration
with parameters~\eqref{LPpar}. One of them is the semipartial geometry $LP(4,q)$ and the others are
not semipartial geometries.
\end{theorem}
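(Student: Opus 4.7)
The plan is to assemble the observations already developed in the paragraphs preceding the theorem into four pairwise non-isomorphic configurations. I would exhibit $LP(4,q)$, $LP(4,q)^\pi$, $LP(4,q)_{\pi'}$, and $LP(4,q)_{\pi'}^\pi$, and for each of the last three verify three things: (i) it is a well-defined configuration with parameters \eqref{LPpar}; (ii) its point graph is strongly regular with the required parameters, so Theorem~\ref{linegraph} applies and yields a strongly regular configuration in the sense of Definition~\ref{maindef}; and (iii) it is not a semipartial geometry, hence not isomorphic to $LP(4,q)$.

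For $LP(4,q)^\pi$, the polarity $\pi$ restricted to the lines of $H_0$ is a bijection, and the rule $\pi(L)\subseteq p$ induces the same number of (line, plane)-incidences inside $H_0$ as $L\subseteq p$ does; so point and line degrees are preserved, giving parameters \eqref{LPpar}. Partial linearity follows from the preceding observation that the point graph coincides with that of $LP(4,q)$, itself a consequence of Lemma~\ref{coplanarity} applied to the image pair $\pi(L_1),\pi(L_2)$. Strong regularity of the line graph is then a direct invocation of Theorem~\ref{linegraph}. For non-semipartiality I would record precisely the example already indicated: choose a plane $p\subseteq H_0$ and a line $L$ meeting $H_0$ only at $\pi(p)$. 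Any point $M$ incident with $p$ in $LP(4,q)^\pi$ satisfies $\pi(M)\subseteq p$, hence $\pi(p)\in M$, hence $M$ meets $L$, hence $M$ is coplanar with $L$ by Lemma~\ref{coplanarity}, i.e.\ collinear as a point of the configuration. All $q^2+q+1$ points on $p$ are therefore collinear with $L$, which is incompatible with the only admissible values $0$ or $\alpha=q+1$ for a semipartial geometry.

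The dual transformation $LP(4,q)_{\pi'}$ is, by construction, the dual of $LP(4,q)^\pi$; since $LP(4,q)$ is self-dual, dualising the previous step shows $LP(4,q)_{\pi'}$ is a strongly regular $(v_k;\lambda,\mu)$ configuration with line graph identical to that of $LP(4,q)$ and point graph modified, and is not semipartial because semipartiality is a self-dual property. For $LP(4,q)_{\pi'}^\pi$, the non-incidence of $P_0$ and $H_0$ ensures the two modifications act on disjoint sets of (point, line) incidences of $LP(4,q)$, so they can be performed simultaneously and commute; the resulting structure is self-dual via any polarity of $PG(4,q)$ exchanging $P_0$ with $H_0$. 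Its point graph coincides with that of $LP(4,q)_{\pi'}$ and its line graph with that of $LP(4,q)^\pi$, so Theorem~\ref{linegraph} and the previous non-semipartiality argument apply again.

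Pairwise non-isomorphism is then read off from the graphs together with the semipartiality invariant: $LP(4,q)$ is the unique semipartial member of the list; among the remaining three, $LP(4,q)^\pi$ has point graph equal to that of $LP(4,q)$ while $LP(4,q)_{\pi'}$ and $LP(4,q)_{\pi'}^\pi$ do not, and conversely for line graphs, so the three are distinguished by which of the two associated graphs remains equal to the original $SRG$ of $LP(4,q)$. The main obstacle I anticipate is the careful bookkeeping needed to confirm partial linearity of $LP(4,q)^\pi$ and to verify that the two polarity switches really do act on disjoint parts of $LP(4,q)$ when $(P_0,H_0)$ is non-incident; the non-isomorphism at the end is clean once one tracks which of the two associated graphs is preserved in each construction.
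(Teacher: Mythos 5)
Your proposal reconstructs the paper's own argument almost verbatim: the same four configurations $LP(4,q)$, $LP(4,q)^\pi$, $LP(4,q)_{\pi'}$, $LP(4,q)^\pi_{\pi'}$, the same use of Lemma~\ref{coplanarity} to show the point graph is unchanged and of Theorem~\ref{linegraph} to get strong regularity of the modified graph, and the same non-incident pair $(L,p)$ with all $q^2+q+1$ points of $p$ collinear with $L$ witnessing failure of semipartiality. The one soft spot --- distinguishing the three non-semipartial configurations by which associated graph is \emph{literally} unchanged, which is equality of labelled graphs rather than an isomorphism invariant --- is equally implicit in the paper's treatment (which records the same "same point graph / same line graph" bookkeeping and the self-duality of $LP(4,q)^\pi_{\pi'}$), so the two approaches coincide.
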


We now present an infinite family of strongly regular configurations with parameters
different from semipartial geometries. The construction works by deleting a
suitable subset from a projective plane, similarly as constructions of elliptic
semiplanes.

\begin{theorem}\label{triangle}
Let $\P$ be a projective plane of order $n\ge 5$ and $A$, $B$, $C$
be three non-collinear points. By deleting all points on the lines
$AB$, $AC$, $BC$ and all lines through the points $A$, $B$, $C$, there
remains a strongly regular $(v_k;\lambda,\mu)$ configuration with $v=(n-1)^2$, $k=n-2$,
$\lambda=(n-4)^2+1$, and $\mu=(n-3)(n-4)$. This configuration is not an
$(\alpha,\beta)$-geometry.
\end{theorem}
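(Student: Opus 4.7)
The plan is: verify the parameters $v=b=(n-1)^2$ and $k=n-2$ by inclusion-exclusion in $\P$; identify the complement of the point graph as the collinearity graph of a $3$-net of order $n-1$ and read off the parameters; invoke Theorem~\ref{linegraph} for the line graph; and finally rule out the $(\alpha,\beta)$-geometry property by exhibiting three distinct intersection numbers for non-incident point-line pairs.

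For the counts, $|AB\cup AC\cup BC|=3(n+1)-3=3n$, leaving $(n-1)^2$ remaining points and, dually, $(n-1)^2$ remaining lines. For a remaining point $P$, the three lines $PA,PB,PC$ are distinct (any coincidence would force $P$ onto a side), and they are precisely the three deleted lines through $P$, giving point degree $n-2$; the line degree is computed dually. The partial-linear-space property is inherited from $\P$.

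Two remaining points $P,Q$ are non-adjacent in the point graph iff the line $PQ$ passes through one of $A,B,C$. The $n-1$ lines through $A$ other than $AB,AC$ partition the $(n-1)^2$ remaining points into $n-1$ classes of size $n-1$ (each such line $m$ contains $n+1-2=n-1$ remaining points, namely all but $A$ and $m\cap BC$), and analogously for $B,C$. These three partitions are orthogonal --- two remaining points cannot be in a common class for two distinct partitions, else $PQ$ would coincide with a side --- and classes from different partitions meet in a single remaining point (the intersection is not a vertex nor on a side, since otherwise one of the two lines would have to coincide with a side). Hence we obtain a $3$-net of order $n-1$, whose collinearity graph is the $SRG((n-1)^2,\,3(n-2),\,n-1,\,6)$ (a direct count of common neighbours from each pair of parallel classes). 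Complementation yields the point graph parameters $SRG((n-1)^2,(n-2)(n-3),(n-4)^2+1,(n-3)(n-4))$; Theorem~\ref{linegraph} transfers strong regularity to the line graph.

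The main obstacle is the last claim, that the configuration is not an $(\alpha,\beta)$-geometry. For a non-incident remaining pair $(P,\ell)$, let $U_A=\ell\cap AB$, $U_B=\ell\cap AC$, $U_C=\ell\cap BC$ be the three deleted points of $\ell$, and $R_A=PA\cap\ell$, $R_B=PB\cap\ell$, $R_C=PC\cap\ell$ the points of $\ell$ joined to $P$ by deleted lines. A case check shows $R_A$ is non-remaining only when $R_A=U_C$ (intersections of $PA$ with $AB$ or $AC$ reduce to $A\notin\ell$), and analogously $R_B$ only when $R_B=U_B$, and $R_C$ only when $R_C=U_A$. The number of points on $\ell$ collinear with $P$ in the configuration is then
\[
n-5+|\{R_A,R_B,R_C\}\cap\{U_A,U_B,U_C\}|,
\]
and lies in $\{n-5,n-4,n-3,n-2\}$. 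To finish I would exhibit three distinct values of this intersection number by choosing $P$ appropriately. A ``generic'' $P$ lying off the three lines $AU_C,BU_B,CU_A$ gives intersection number $n-5$; since these lines together contain at most $3(n-1)$ of the $(n-1)(n-2)$ remaining points off $\ell$, such a $P$ exists for $n\ge 5$. A $P$ on exactly one of the three lines gives $n-4$. Finally, the unique intersection point of any two of these lines, which I would verify is remaining and off $\ell$ independently of $\P$, forces the intersection number to be at least $n-3$. Three distinct values preclude the $(\alpha,\beta)$-geometry property.
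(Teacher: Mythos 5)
Your proof is correct and reaches the same conclusions, but by a genuinely different route in both halves. For the parameters, the paper computes $\lambda$ and $\mu$ directly by inclusion--exclusion on the deleted points lying on the lines $AP$, $BP$, $CP$ and $AQ$, $BQ$, $CQ$; you instead recognize the complement of the point graph as the collinearity graph of a $3$-net of order $n-1$ (the three pencils through $A$, $B$, $C$) and complement the Latin-square-graph parameters $SRG((n-1)^2,3(n-2),n-1,6)$. Your version is more structural and in fact explains the remark following the theorem that the point graphs are pseudo Latin square graphs $LS_{n-3}(n-1)$, at the cost of importing the (standard) parameter computation for net graphs; the paper's count is more self-contained. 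For the non-$(\alpha,\beta)$ claim, both arguments reduce to the same key identity: the number of points of $\ell$ collinear with $P$ equals $n-5$ plus the number of the three lines $A(\ell\cap BC)$, $B(\ell\cap AC)$, $C(\ell\cap AB)$ that contain $P$. The paper then splits into cases according to whether these three lines are concurrent, tying the concurrent case to Fano subplanes; you instead exhibit explicit points $P$ realizing the three values $n-5$, $n-4$ and something at least $n-3$, and taking $P$ to be the intersection of two of the three lines neatly sidesteps the concurrency dichotomy (the verification that this intersection is a remaining point off $\ell$ does go through). One small slip to repair: the number of remaining points off $\ell$ is $(n-1)^2-(n-2)=n^2-3n+3$, not $(n-1)(n-2)=n^2-3n+2$; with your figure the comparison against $3(n-1)$ degenerates to equality at $n=5$, so the existence of a ``generic'' $P$ would not follow there, whereas with the correct count (or by noting that the three lines pairwise meet, hence cover at most $3(n-1)-2$ remaining points) the argument works for all $n\ge 5$.
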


\begin{proof}
The number of points and lines in the remaining configuration is
$v=n^2+n+1-3-3(n-1)=(n-1)^2$ and they are of degree $k=n-2$. Let
$P$ and $Q$ be two remaining points that are collinear, i.e.\
are not on a line of $\P$ through $A$, $B$ or $C$. Then the points
non-collinear with $P$ are the remaining points on the
lines $AP$, $BP$, $CP$. There are $3(n-2)$ such points, and as many
for $Q$. The points non-collinear with both $P$ and $Q$ are
the intersections of one of the lines $AP$, $BP$, $CP$ with one of
the lines $AQ$, $BQ$, $CQ$; there are $6$ such points. By
inclusion-exclusion, the number of points in the remaining
configuration collinear with both $P$ and $Q$ is $\lambda=(n-1)^2-2
-6(n-2)+6=(n-4)^2+1$. If the points~$P$ and~$Q$
are non-collinear, a similar count shows that the number of
points in the remaining configuration collinear with both $P$ and $Q$ is
$\mu=(n-3)(n-4)$.

Let $(P,\ell)$ be a non-incident point-line pair of the remaining
configuration. We now count the points on $\ell$ collinear with $P$.
Let $A'$, $B'$, $C'$ be the intersections of $BC$, $AC$, $AB$ with~$\ell$.
These are the deleted points of~$\ell$. If the lines $AA'$, $BB'$, $CC'$
are concurrent, $P$ lies on $0$, $1$ or $3$ of these lines. Then there are
$n-5$, $n-4$ or $n-2$ points on~$\ell$ collinear with~$P$. In this case, the points
$A$, $B$, $C$, $A'$, $B'$, $C'$ and the common point of $AA'$, $BB'$, $CC'$
form a Fano subplane, so this can only occur if $n$ is even or $\P$ is non-Desarguesian.
On the other hand, if the lines $AA'$, $BB'$, $CC'$
are not concurrent, $P$ lies on $0$, $1$ or $2$ of these lines and there are
$n-5$, $n-4$ or $n-3$ points on~$\ell$ collinear with~$P$. In both cases there are three
possibilities for the number of points on~$\ell$ collinear with $P$, so the configuration
is not an $(\alpha,\beta)$-geometry.
\end{proof}

The associated graphs have parameters
$$SRG((n-1)^2,(n-2)(n-3),(n-4)^2+1,(n-3)(n-4)).$$
These are pseudo Latin square graphs $LS_{n-3}(n-1)$, see \cite[Section 8.4.2]{BvM21}.
For $n=5$, we get the Shrikhande graph~\cite{SSS59} which is not a Latin square graph.
For $n=7$, the graphs have parameters $LS_4(6)$ and are not Latin square graphs
because there are no orthogonal Latin squares of order~$6$.

In the smallest case $n=5$, the $(16_3;2,2)$ configuration of Theorem~\ref{triangle}
can be extended to a $(16_4;8,12)$ configuration by adding a point to every line.
This is a $(4,4)$-net and can be embedded in the projective plane of order~$4$.
This is an interesting transformation of the projective plane of order~$5$ into the
projective plane of order~$4$, but it does not generalize to $n>5$.

In the Desarguesian projective plane $PG(2,q)$, all triangles $\{A,B,C\}$
are equivalent and Theorem~\ref{triangle} gives just one
strongly regular configuration up to isomorphism, being self-dual. The
smallest non-De\-sar\-guesian projective planes are of order~$9$: the Hall
plane, its dual, and the self-dual Hughes plane.  The Hall plane contains
six inequivalent triangles and as many non-isomorphic $(64_7;26,30)$ configurations
arise from Theorem~\ref{triangle}. These
configurations are not self-dual. Of course, they are duals of the configurations
derived from the dual Hall plane. The Hughes plane contains $16$ inequivalent
triangles. The corresponding configurations are not isomorphic; $10$
are self-dual and there are $3$ dual pairs. Information on the orders of full
automorphism groups of these configurations is given in Table~\ref{table64}.

\begin{table}[h]
\begin{tabular}{|c c c|c c c|}
\hline
Plane & $|\Aut|$ & \#Cf & Plane & $|\Aut|$ & \#Cf \\
\hline
\rule{0mm}{12pt}$PG(2,9)$ & 768 & 1 & Hughes & 144 & 1 \\
\cline{1-3}
\rule{0mm}{12pt}Hall & 768 & 1 & & 48 & 1 \\
\rule{0mm}{11pt} & 96 & 2 & & 32 & 1 \\
\rule{0mm}{11pt} & 12 & 2 & & 18 & 1 \\
\rule{0mm}{11pt} & 6 & 1 & & 12 & 3 \\
\cline{1-3}
\rule{0mm}{12pt}Dual Hall & 768 & 1 & & 6 & 4 \\
\rule{0mm}{11pt} & 96 & 2 & & 4 & 3 \\
\rule{0mm}{11pt} & 12 & 2 & & 2 & 1 \\
\rule{0mm}{11pt} & 6 & 1 & & 1 & 1 \\
\hline
\end{tabular}
\vskip 3mm \caption{Distribution of $(64_7;26,30)$ configurations 
by order of full automorphism group.}\label{table64}
\end{table}

Configurations obtained from different projective planes of order $9$ are not isomorphic.
Hence, the total number of $(64_7;26,30)$ configurations arising from Theorem~\ref{triangle}
is $29$. We could not find any other examples with these parameters.
This, together with the uniqueness results of Section~\ref{sec5} (Corollary~\ref{src16}
and Proposition~\ref{src36}), seems to suggest that every strongly regular configuration
with parameters from Theorem~\ref{triangle} can be uniquely embedded in a projective plane
of order~$n$, but we do not have a proof.

\section{Strong deficient difference sets}\label{sec4}

Next we present constructions of strongly regular configurations
using difference sets. Let $G$ be a group of order~$v$. A subset
$D\subseteq G$ of size~$k$ is a \emph{deficient difference set}
if for every $x\in G\setminus\{1\}$, there is at most one pair
$(d_1,d_2)\in D\times D$ such that $x=d_1^{-1}d_2$.
Shortly, the left differences $ d_1^{-1}d_2$ must all be distinct.
This is equivalent with the right differences $d_1 d_2^{-1}$ being
distinct. The elements of~$G$ as points and the \emph{development}
$\dev D=\{gD \mid g\in G\}$ as lines form a symmetric $(v_k)$
configuration. The configuration has $G$ an automorphism group
acting regularly on the points and lines~\cite{MF08, MPW87}. In
the cyclic case $G=\Z_v$, deficient difference sets are also called
\emph{modular Golomb rulers}~\cite{BS21}.

Let $\Delta(D)=\{d_1^{-1}d_2 \mid d_1,d_2\in D, d_1\neq d_2\}$ be
the set of left differences of~$D$. This is a subset of $G\setminus
\{1\}$ of size $k(k-1)$. For a group element $x\neq 1$, denote by
$n(x)=|\Delta(D)\cap x\Delta(D)|$. Suppose that $n(x)=\lambda$ for
every $x\in \Delta(D)$, and $n(x)=\mu$ for every $x\not\in
\Delta(D)$. We shall call a subset $D$ with this property a
\emph{strong deficient difference set (SDDS)} for
$(v_k;\lambda,\mu)$.

\begin{theorem} Let $G$ be a group and
$D\subseteq G$ a strong deficient difference set for
$(v_k;\lambda,\mu)$. Then, $(G,\dev D)$ is a strongly regular $(v_k;\lambda,\mu)$
configuration with~$G$ as an automorphism group acting regularly on the points
and lines. Conversely, any strongly regular $(v_k;\lambda,\mu)$ configuration with
an automorphism group~$G$ acting regularly on the points and lines can be obtained
from a $SDDS$ in $G$.
\end{theorem}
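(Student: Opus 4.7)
The plan is to exploit, in both directions, the identification of the point graph of $(G, \dev D)$ with the Cayley graph $\mathrm{Cay}(G, \Delta(D))$, where $G$ acts on itself by left translation. Note that $\Delta(D)$ is inverse-closed, since $(d_1^{-1}d_2)^{-1} = d_2^{-1}d_1$ is again a left difference, so the Cayley graph is well defined, and all other assertions become transcriptions between group-theoretic and combinatorial language.

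For the forward direction, I would first recall the standard fact that a deficient difference set $D$ gives a symmetric $(v_k)$ configuration on which $G$ acts as a regular automorphism group on points. Regularity on lines amounts to showing $\mathrm{Stab}_G(D) = \{1\}$: if $gD = D$ and $g \neq 1$, then each $d \in D$ yields a pair $(gd, d) \in D \times D$ of distinct elements with the same right difference $g$, producing $k \geq 2$ pairs that violate the distinctness of differences. Two distinct points $g_1, g_2$ are collinear exactly when they lie on some translate $hD$, equivalently $g_1^{-1}g_2 \in \Delta(D)$, so the point graph is $\mathrm{Cay}(G, \Delta(D))$ of valency $|\Delta(D)| = k(k-1)$. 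The number of common neighbors of $g_1, g_2$ comes out to $|\Delta(D) \cap (g_1^{-1}g_2)\Delta(D)| = n(g_1^{-1}g_2)$, which by the SDDS hypothesis equals $\lambda$ or $\mu$ according to whether the two points are collinear. Strong regularity of the line graph with the same parameters then follows directly from Theorem~\ref{linegraph}.

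For the converse, given a strongly regular $(v_k;\lambda,\mu)$ configuration $\C$ admitting $G$ acting regularly on points and lines, I would identify the point set with $G$ by fixing a basepoint $p_0 \leftrightarrow 1$, fix any line $\ell_0$, and set $D$ equal to the set of $k$ group elements lying on $\ell_0$. Regularity of the action on lines immediately gives that every line is uniquely a translate $gD$, so the line set is exactly $\dev D$ and $\mathrm{Stab}_G(\ell_0)$ is trivial. To verify that $D$ is a deficient difference set, I would argue that a coincidence $d_1^{-1}d_2 = e_1^{-1}e_2$ with $(d_1,d_2) \neq (e_1,e_2)$ in $D \times D$ (and $d_1 \neq d_2$) forces $h\ell_0$ to contain both $e_1$ and $e_2$ for $h = e_1 d_1^{-1}$; since $\ell_0$ also contains them, the partial linear axiom gives $h\ell_0 = \ell_0$, whence $h = 1$ and $(d_1,d_2) = (e_1,e_2)$, a contradiction. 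The same Cayley-graph identification then shows that the SRG parameters of the point graph translate exactly into the SDDS conditions $n(x) = \lambda$ for $x \in \Delta(D)$ and $n(x) = \mu$ for $x \in G \setminus (\Delta(D) \cup \{1\})$.

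The only subtle point is the interplay between the distinctness of the $k(k-1)$ differences and the partial linear axiom of the configuration: both directions rely on the same dictionary, translating a coincidence of left differences into a coincidence of lines through two points, and vice versa. Once that dictionary is in place, everything else is a routine match of the SRG definition against the definition of the SDDS, together with one invocation of Theorem~\ref{linegraph} to handle the line graph on the constructive side.
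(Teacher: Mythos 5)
Your proposal is correct and follows essentially the same route as the paper: both directions hinge on identifying collinearity of $x,y$ with $x^{-1}y\in\Delta(D)$ and computing the number of common neighbours as $n(x^{-1}y)=|\Delta(D)\cap x^{-1}y\Delta(D)|$, with Theorem~\ref{linegraph} handling the line graph. You merely spell out details the paper delegates to citations or leaves implicit (triviality of the line stabilizer, and the verification in the converse that a line is a deficient difference set), so there is nothing substantively different to compare.
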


\begin{proof}
Two points $x,y\in G$ are collinear if and only if $x^{-1}y\in
\Delta(D)$. Let us count the number of points $z\in
G\setminus\{x,y\}$ collinear with both $x$ and $y$. This is
equivalent with $x^{-1}z\in \Delta(D)$ and $y^{-1}z\in \Delta(D)$,
or $z\in x\Delta(D)\cap y\Delta(D)$, or $x^{-1}z \in \Delta(D)\cap
x^{-1}y\Delta(D)$. The number of such points $z$ is $\lambda$ if
$x^{-1}y\in \Delta(D)$, i.e.\ if $x$ and $y$ are collinear, and
$\mu$ otherwise. Hence, the point graph is strongly regular with
parameters $SRG(v,k(k-1),\lambda,\mu)$.

Conversely, assume a $(v_k;\lambda,\mu)$ configuration possesses an
automorphism group~$G$ acting regularly. Then the points can be identified
with the elements of~$G$ and every block is a deficient difference set
generating this configuration. The argument above shows that it is
in fact a $(v_k;\lambda,\mu)$ SDDS.
\end{proof}

Configurations constructed from $PG(2,q)$ by Theorem~\ref{triangle}
can be obtained from strong deficient difference sets in the group
$G=\F_q^*\times \F_q^*$. Here, $\F_q^*$ denotes the multiplicative group of
the finite field $\F_q$, isomorphic to the cyclic group $\Z_{q-1}$.
If two of the points $\{A,B,C\}$ are chosen on the ``line at infinity''
and the third point as the ``origin'' $(0,0)$, points of the configuration
can be identified with pairs $(x,y)$ with $x,y\in \F_q^*$. Lines are sets
of points satisfying equations of the form $y=ax+b$, $a,b\in\F_q^*$. Hence, e.g.\
$D=\{(x,x+1) \mid x\in \F_q^*\setminus \{-1\}\}$ is a
SDDS for $(v_k;\lambda,\mu)$ with $v=(q-1)^2$, $k=q-2$, $\lambda=(q-4)^2+1$, and $\mu=(q-3)(q-4)$.
The full automorphism group of the configuration is
$((\F_q^*\times \F_q^*):\Aut(\F_q)):S_3$, where $\Aut(\F_q)$ are the
field automorphisms, and $S_3$ corresponds to collineations of $PG(2,q)$
exchanging vertices of the triangle $\{A,B,C\}$.

The two $(64_7;26,30)$ configurations with full automorphism groups of order $768$
constructed from the Hall plane and its dual (see Table~\ref{table64})
can be obtained from SDDS's in the group $G=Q_8\times Q_8$, where $Q_8=\{\pm 1,\pm i,\pm j,\pm k\}$
is the quaternion group with usual multiplication (e.g.\
$i^2=j^2=k^2=-1$, $ij=k$). The difference set
$$D_1=\{(1,1), (i,-k), (j,k), (k,-j), (-i,j), (-j,i), (-k,-i)\}$$
gives the configuration constructed from the Hall plane and
$$D_2=\{(1,1), (i,-k), (j,j), (k,-j), (-i,-i), (-j,i), (-k,k)\}$$
gives the dual configuration. The Hall plane of order $9$ and its dual are
coordinatized by the quaternionic near-field. The first configuration
arises from Theorem~\ref{triangle} when two of the points $\{A,B,C\}$
are chosen on the translation line of the Hall plane, and the second configuration when one
of the points is the translation point of the dual Hall plane.

We performed an exhaustive computer search for strong deficient
difference sets with parameters corresponding to proper and primitive
strongly regular configurations in groups of order $v\le 200$, using
the GAP library of small groups~\cite{GAP4}.
Apart from the examples just described, we found four other examples
not corresponding to Theorem~\ref{triangle}. The
configurations constructed from these SDDS's have flag-transitive
automorphism groups. Here are their descriptions.

\begin{example}\label{sdds13}
SDDS's for $(13_3;2,3)$ exist in the cyclic group $\Z_{13}$. There is
one SDDS fixed by the multiplier $3$: $\{7,8,11\}$. The development
has full automorphism group $\Z_{13}:\Z_3$ acting flag-transitively.
\end{example}

This is the only cyclic strongly regular configuration we found. It
can be embedded in the projective plane of order $3$ by adding a
point to every line.

\begin{example}\label{sdds96}
SDDS's for $(96_5;4,4)$ exist in the groups $\Z_4 \times S_4$, $(\Z_2
\times \Z_2 \times A_4) : \Z_2$, $D_8\times A_4$ and $\Z_2 \times
\Z_2 \times S_4$. Here is one SDDS in $\Z_4\times S_4$:
$$\{(0,id), (1, (1,4)(2,3)), (1,(1,3,4,2)),
  (1, (1,4,3)), (2, (1,2,4)) \}.$$
The developments are all isomorphic and give one self-dual configuration.
The full automorphism group is $((\Z_2 \times \Z_2 \times \Z_2 \times \Z_2) : A_6) : \Z_2$ of
order $11520$ and acts flag-transitively.
\end{example}

The associated graphs have parameters $SRG(96,20,4,4)$. Many such graphs
are known, see~\cite{BKK03, GMV06}. The graph with the largest
automorphism group of order $138240$ is the point graph of the generalized
quadrangle $pg(5,3,1)$. The graph of the $(96_5;4,4)$
configuration has full automorphism group of order $11520$.
In~\cite{BKK03}, this graph is denoted by $K''$ and the configuration is
mentioned as a ``partial linear space with five points per line and five lines
on each point''.

\begin{example}\label{sdds120}
SDDS's for $(120_8;28,24)$ exist in the symmetric group $S_5$, e.g.
\begin{align*}
 \{ & id,\, (1,2,5,3,4),\, (1,3,4,2,5),\, (1,5,3,2,4),\, (1,4)(2,3,5), \\[1mm]
 & (1,4,5,2),\, (1,2,4),\, (1,2,5)\}.
\end{align*}
Up to isomorphism one self-dual strongly regular configuration arises.
The full automorphism group is isomorphic to the alternating group~$A_8$
of size $20160$ and acts flag-transitively.
\end{example}

This $(120_8;28,24)$ configuration was constructed in~\cite{BHT97}
by embedding the $pg(7,8,4)$ of~\cite{DDT80, AMC81} into a Steiner $2$-$(120,8,1)$
design. The $135$ lines of the $pg(7,8,4)$ and the $120$ lines
of the configuration cover every pair of the $120$ points exactly once
and form a design. The point graphs of the $pg(7,8,4)$ and the $(120_8;28,24)$
configuration are complementary with parameters $SRG(120,63,30,36)$ and
$SRG(120,56,28,24)$, respectively.

The $pg(7,8,4)$ is part of an infinite family constructed from
the hyperbolic quadric in $PG(4n-1,2)$~\cite{DDT80}. The family is
denoted by $PQ^+(2n-1,2)$ and has parameters $pg(2^{2n-1}-1,2^{2n-1},2^{2n-2})$.
These parameters fit a hypothetical $(v_k;\lambda,\mu)$ configuration with
$$v=2^{2n-1}(2^{2n}-1), k=2^{2n-1}, \lambda=2^{2n-2}(2^{2n-1}-1),
\mu=2^{2n-1}(2^{2n-2}-1)$$
to make a $2$-$(v,k,1)$ design, but in \cite[Theorem 2.1]{BHT97}
it was proved that this is not possible for $n>2$. Non-isomorphic partial
geometries with the same parameters were constructed in~\cite{MS97, DD00}
that could possibly be embedded in Steiner $2$-designs.

\begin{example}\label{pg42}
SDDS's for $(155_7;17,9)$ exist in the group $G=\Z_{31}:\Z_5$. Let $G$
be represented as permutations of $\Z_{31}$ generated by $f:x\mapsto
x+1 \pmod{31}$ and $g:x\mapsto 2x \pmod{31}$. Then, $\{id,f^{12}g^4,
f^{15}g, f^{18}, f^{20}g^2,$ $f^{26}g^3, f^{30}\}$ is a SDDS. One
self-dual strongly regular configuration arises, isomorphic to the
semipartial geometry $LP(4,2)$. The full automorphism group $P\Gamma L(5,2)$
is of order $9999360$ and acts flag-transitively.
\end{example}

The configurations obtained from $LP(4,2)$ by polarity transformations
cannot be constructed from SDDS because their full automorphism groups are
not transitive. The dual pair $LP(4,2)^\pi$ and $LP(4,2)_{\pi'}$ have
full automorphism groups of order $322560$ isomorphic to $(\Z_2)^4:P\Gamma L(4,2)$.
The group acts in orbits of size $35$, $120$ on the points and $15$, $140$ on the lines
of $LP(4,2)^\pi$, and vice versa for $LP(4,2)_{\pi'}$.
The self-dual configuration $LP(4,2)^{\pi}_{\pi'}$ has full automorphism group
of order $20160$ isomorphic to $P\Gamma L(4,2)$ acting in orbits of size $15$, $35$, $105$.

Our final examples of strongly regular configurations can
also not be obtained from SDDS's. They don't admit automorphism
groups acting regularly, although some have flag-transitive
automorphism groups.

\begin{example}\label{src63}
There are at least four non-isomorphic $(63_6;13,15)$
configurations. Two of them are self-dual
with full automorphism groups $PSU(3,3) : \Z_2$ of order
$12096$ acting flag-transitively. Furthermore, there is a dual pair
with full automorphism groups $(SL(2,3) : \Z_4) : \Z_2$ of
order $192$ acting in orbits of size $1$, $6$, $24$, $32$.
\end{example}

The two self-dual $(63_6;13,15)$ configurations are related to the
smallest generalized hexagon $GH(2,2)$ (see~\cite[Section 5.7]{GR01}).
This is a $(63_3)$ configuration  with point and line graphs of
girth $12$ and diameter $6$. The graphs are distance regular, but
not strongly regular. A strongly regular $(63_6;13,15)$ configuration
can be constructed similarly as a semipartial geometry
from a Moore graph: the new configuration has the same points as
$GH(2,2)$, and lines of the new configuration are sets of $6$ points
collinear with a given point of $GH(2,2)$. The point graph of this
$(63_6)$ configuration is a $SRG(63,30,13,15)$. The other self-dual
$(63_6;13,15)$ configuration is constructed in the same way from the
dual of $GH(2,2)$. We discovered the dual pair of non-transitive
$(63_6;13,15)$ configurations computationally, by prescribing
automorphism groups.

\section{A table of feasible parameters}\label{sec5}

In the final section we present a table of feasible parameters
of strongly regular configurations with $v\le 200$. A.~E.~Brouwer's
table of strongly regular graphs~\cite{AEB} contains $437$ parameter sets
$SRG(v,d,\lambda,\mu)$ with $v\le 200$. It is known that strongly regular
graphs do not exist in $62$ cases. Among the remaining $375$
cases, we look for those with $d=k(k-1)$ for some $k\ge 3$. This way
we get $64$ parameter sets $(v_k;\lambda,\mu)$.

Eleven of the $64$ parameter sets do not satisfy Theorem~\ref{cliquecond}.
Six satisfy the theorem with equality and correspond to
partial geometries $pg(2,2,1)$, $pg(3,3,1)$, $pg(6,6,4)$, $pg(5,5,2)$,
$pg(4,4,1)$, and $pg(5,5,1)$. The $pg(q,q,1)$ with $q=2,3,4,5$ are the classical generalized
quadrangles $W(q)$ and their duals, see~\cite{PT09}.
Two non-isomorphic $pg(5,5,2)$'s are known~\cite{vLS81, CST21, VK20b}, whereas
the existence of a $pg(6,6,4)$ is open. Six of the remaining $47$ parameter
sets are eliminated by Proposition~\ref{squarecond}.

\begin{table}
\begin{center}
\begin{tabular}{|c c c c l|}
\hline
No. & $(v_k;\lambda,\mu)$ & \#Cf & \#SCf & Comments\\
\hline
\rule{0mm}{4.5mm}1 & $(10_{3};3,4)$ & \textbf{2} & \textbf{2} & \\
\rule{0mm}{4.5mm}2 & $(13_{3};2,3)$ & \textbf{1} & \textbf{1} & Proposition~\ref{src13}\\
\rule{0mm}{4.5mm}3 & $(16_{3};2,2)$ & \textbf{1} & \textbf{1} & Proposition~\ref{src16}\\
\rule{0mm}{4.5mm}4 & $(25_{4};5,6)$ & \textbf{0} & \textbf{0} & Proposition~\ref{non25-4}\\
\rule{0mm}{4.5mm}5 & $(36_{5};10,12)$ & \textbf{1} & \textbf{1} &  Proposition~\ref{src36}\\
\rule{0mm}{4.5mm}6 & $(41_{5};9,10)$ & ? & ? & \\
\rule{0mm}{4.5mm}7 & $(45_{4};3,3)$ & \textbf{0} & \textbf{0} & Proposition~\ref{non45-4}\\
\rule{0mm}{4.5mm}8 & $(49_{4};5,2)$ & \textbf{0} & \textbf{0} & Corollary~\ref{norook} \\
\rule{0mm}{4.5mm}9 & $(49_{6};17,20)$ & 1 & 1 & Theorem~\ref{triangle}\\
\rule{0mm}{4.5mm}10 & $(50_{7};35,36)$ & 211 & 111 & Proposition~\ref{src50}\\
\rule{0mm}{4.5mm}11 & $(61_{6};14,15)$ & ? & ? & \\
\rule{0mm}{4.5mm}12 & $(63_{6};13,15)$ & 4 & 2 & Example~\ref{src63}\\
\rule{0mm}{4.5mm}13 & $(64_{7};26,30)$ & 29 & 11 & Theorem~\ref{triangle}\\
\rule{0mm}{4.5mm}14 & $(81_{8};37,42)$ & ? & ? & \\
\rule{0mm}{4.5mm}15 & $(85_{6};11,10)$ & ? & ? & \\
\rule{0mm}{4.5mm}16 & $(85_{7};20,21)$ & ? & ? & \\
\rule{0mm}{4.5mm}17 & $(96_{5};4,4)$ & 1 & 1 & Example~\ref{sdds96}\\
\rule{0mm}{4.5mm}18 & $(99_{7};21,15)$ & ? & ? & \\
\rule{0mm}{4.5mm}19 & $(100_{9};50,56)$ & 1 & 1 & Theorem~\ref{triangle}\\
\rule{0mm}{4.5mm}20 & $(105_{9};51,45)$ & ? & ? & \\
\rule{0mm}{4.5mm}21 & $(113_{8};27,28)$ & ? & ? & \\
\rule{0mm}{4.5mm}22 & $(120_{8};28,24)$ & 1 & 1 & Example~\ref{sdds120}\\
\rule{0mm}{4.5mm}23 & $(121_{5};9,2)$ & \textbf{0} & \textbf{0} & Corollary~\ref{norook} \\
\rule{0mm}{4.5mm}24 & $(121_{6};11,6)$ & ? & ? & \\
\rule{0mm}{4.5mm}25 & $(121_{9};43,42)$ & ? & ? & \\
\rule{0mm}{4.5mm}26 & $(121_{10};65,72)$ & ? & ? & \\
\rule{0mm}{4.5mm}27 & $(125_{9};45,36)$ & ? & ? & \\
\rule{0mm}{4.5mm}28 & $(136_{6};15,4)$ & ? & ? & \\
\rule{0mm}{4.5mm}29 & $(136_{9};36,40)$ & ? & ? & \\
\rule{0mm}{4.5mm}30 & $(144_{11};82,90)$ & 1 & 1 & Theorem~\ref{triangle}\\
\hline
\end{tabular}
\end{center}
\vskip 3mm \caption{Feasible parameters of proper primitive strongly regular
configurations.}\label{fptable}
\end{table}

\addtocounter{table}{-1}

\begin{table}
\begin{center}
\begin{tabular}{|c c c c l|}
\hline
No. & $(v_k;\lambda,\mu)$ & \#Cf & \#SCf & Comments\\
\hline
\rule{0mm}{4.5mm}31 & $(145_{9};35,36)$ & ? & ? & \\
\rule{0mm}{4.5mm}32 & $(153_{8};19,21)$ & ? & ? & \\
\rule{0mm}{4.5mm}33 & $(155_{7};17,9)$ & 4 & 2 & Theorem~\ref{LPtransformed}\\
\rule{0mm}{4.5mm}34 & $(169_{9};31,30)$ & ? & ? & \\
\rule{0mm}{4.5mm}35 & $(169_{12};101,110)$ & ? & ? & \\
\rule{0mm}{4.5mm}36 & $(171_{11};73,66)$ & ? & ? & \\
\rule{0mm}{4.5mm}37 & $(175_{6};5,5)$ & ? & ? & \\
\rule{0mm}{4.5mm}38 & $(181_{10};44,45)$ & ? & ? & \\
\rule{0mm}{4.5mm}39 & $(196_{10};40,42)$ & ? & ? & \\
\rule{0mm}{4.5mm}40 & $(196_{13};122,132)$ & ? & ? & \\
\rule{0mm}{4.5mm}41 & $(196_{13};125,120)$ & ? & ? & \\
\hline
\end{tabular}
\end{center}
\vskip 3mm \caption{Feasible parameters of proper primitive strongly regular
configurations (continued).}
\end{table}

Thus, there are $41$ feasible parameter sets $(v_k;\lambda,\mu)$ of
proper and primitive strongly regular configurations with $v\le 200$.
The para\-meters are listed in Table~\ref{fptable} along with information
on the numbers of strongly regular configurations (\#Cf)
and self-dual strongly regular configurations (\#SCf) up to isomorphism.
A number in \textbf{boldface} indicates that this is the exact number,
otherwise it is a lower bound.

In the smallest case $(10_3;3,4)$, there are altogether ten combinatorial $(10_3)$
configurations denoted by $(10_3)_i$, $i=1,\ldots,10$ in \cite[Section~2.2]{BG09}.
Two of them are strongly regular: the Desargues configuration~$(10_3)_1$
and the configuration $(10_3)_4$ depicted in Figure~\ref{smallest}. Interestingly, $(10_3)_4$
is the only one of the ten $(10_3)$ configurations that cannot be drawn with
straight lines, i.e.\ that is not a \emph{geometric configuration} (see~\cite{BG09, PS13}).
In the next two cases $(13_3;2,3)$ and $(16_3;2,2)$, the total numbers of
$(13_3)$ and $(16_3)$ configurations are also known: $2036$~\cite{HG90} and $3\,004\,881$~\cite{BBP00},
respectively. Since the number of combinatorial $(v_k)$ configurations grows
rapidly with~$v$, a better approach to classifying strongly regular configurations is
through the associated graphs.

Suppose that a strongly regular graph $\Gamma$ with parameters $SRG(v,k(k-1),\lambda,\mu)$ is the point graph of
a $(v_k;\lambda,\mu)$ configuration. Every line of the configuration gives
a clique of size~$k$ in~$\Gamma$. Thus, there must be~$v$ such cliques with every pair of
them intersecting in at most one point. Given the graph~$\Gamma$, we define
the \emph{clique graph} $\C(\Gamma)$ with vertices being $k$-cliques in~$\Gamma$. Vertices
of $\C(\Gamma)$ are adjacent if the cliques intersect in at most one point. The task
is to find the cliques of size $v$ in $\C(\Gamma)$.

Up to isomorphism, there is a unique graph $SRG(13,6,2,3)$, the
Paley graph. The vertices of $\Gamma$ are elements of the finite field $\F_{13}$
with two vertices being adjacent if their difference is a quadratic residue. Using
Cliquer~\cite{NO03}, we found $26$ cliques of size $3$ in $\Gamma$.
The clique graph $\C(\Gamma)$ has $26$ vertices and $286$ edges. Using Cliquer once more,
we found exactly two cliques of size $13$ in $\C(\Gamma)$, corresponding
to isomorphic $(13_3;2,3)$ configurations. This proves that the cyclic configuration
constructed in Example~\ref{sdds13} is unique.

\begin{proposition}\label{src13}
There is one $(13_3;2,3)$ configuration up to isomorphism.
\end{proposition}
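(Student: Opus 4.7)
The plan is to reduce the classification to a clique-search in an auxiliary graph, exactly as outlined in the paragraph preceding the proposition. First I would invoke the known uniqueness of the strongly regular graph $SRG(13,6,2,3)$: it is the Paley graph $P_{13}$ on the field $\F_{13}$, with edges given by nonzero quadratic residues. Since Theorem~\ref{thmBHT} forces the point graph of any $(13_3;2,3)$ configuration to be strongly regular with these parameters, every such configuration arises as a selection of $13$ triangles (i.e.\ $3$-cliques) in $P_{13}$, pairwise intersecting in at most one vertex.

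Next I would enumerate the $3$-cliques of $P_{13}$. A short calculation or a call to Cliquer~\cite{NO03} produces $26$ triangles. From these I would build the \emph{clique graph} $\C(P_{13})$ with these $26$ triangles as vertices, joining two of them by an edge exactly when they share at most one vertex of $P_{13}$. A $(13_3;2,3)$ configuration with point graph $P_{13}$ corresponds bijectively to a $13$-clique of $\C(P_{13})$, with the caveat that the point degree must equal~$3$; this latter condition is actually automatic once the $13$ cliques pairwise intersect in at most one point, because the total number of point-line incidences is $39=13\cdot 3$ and each of the $13$ points lies in at most $\binom{6}{2}/\binom{2}{1}=$ a bounded number of triangles, so a counting/pigeonhole check at the end confirms regularity.

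Then I would run Cliquer on $\C(P_{13})$ to search for cliques of size $13$. According to the excerpt, the search terminates with exactly two such cliques. It remains to verify that the two associated $(13_3)$ configurations are isomorphic; for this I would compute canonical forms of their incidence structures via nauty~\cite{MP14}, or alternatively observe that the action of $\Aut(P_{13}) = \Z_{13}:\Z_6$ on $\C(P_{13})$ swaps (or fixes) the two $13$-cliques. Either way, one obtains a single isomorphism class, which must coincide with the cyclic configuration built from the SDDS $\{7,8,11\}$ of Example~\ref{sdds13}.

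The main obstacle is really only the computational certification: one must trust (and document) that Cliquer has enumerated \emph{all} $3$-cliques and then \emph{all} $13$-cliques of $\C(P_{13})$. The combinatorics are small enough that the search is essentially instantaneous, and the isomorphism check at the end is routine with nauty, so no serious conceptual difficulty arises beyond carefully setting up the auxiliary graph $\C(P_{13})$.
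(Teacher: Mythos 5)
Your proposal follows essentially the same route as the paper: uniqueness of the Paley graph $SRG(13,6,2,3)$, enumeration of its $26$ triangles, a Cliquer search for $13$-cliques in the auxiliary clique graph $\C(\Gamma)$, and an isomorphism check on the two resulting configurations. The only slip is your bound $\binom{6}{2}/\binom{2}{1}$ on the number of triangles through a point, which should simply be $6/2=3$ (the triangles through a point meet pairwise only in that point, so they partition part of its $6$-vertex neighbourhood into disjoint pairs); combined with the incidence count $39=13\cdot 3$ this gives the regularity you need, and the rest of the argument is exactly the paper's.
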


There are two graphs with parameters $SRG(16,6,2,2)$. One of them is the
Shrikhande graph~\cite{SSS59} with full automorphism group of order $192$.
Similarly as for the previous parameters, we found $32$ cliques of size~$3$ in~$\Gamma$
and two cliques of size~$16$ in~$\C(\Gamma)$, corresponding
to isomorphic $(16_3;2,2)$ configurations.

The other $SRG(16,6,2,2)$ has full automorphism group of order $1152$. This is
the $4\times 4$ \emph{rook graph}, sometimes also called the \emph{lattice graph}
or \emph{grid graph}. Vertices of the $n\times n$ rook
graph $R_n$ are pairs $(x,y)$ with $x,y\in\{1,\ldots,n\}$.
Two vertices $(x_1,y_1)$, $(x_2,y_2)$ are adjacent if $x_1=x_2$ or $y_1=y_2$ holds.
The graph $R_n$ is strongly regular with parameters $SRG(n^2,2(n-1),n-2,2)$ and has
$2n$ maximal cliques of size $n$, being sets of vertices with a fixed coordinate.
Any clique of size at least $2$ is contained in exactly one of these maximal cliques.
If $R_n$ is the point graph of a $(v_k;\lambda,\mu)$ configuration, then $2(n-1)=k(k-1)$
holds. This is equivalent with $n={k\choose 2}+1$ and the configuration
would have parameters
\begin{equation}\label{rookparam}
v=\left( {k\choose 2}+1\right)^2,\kern 3mm \lambda={k\choose 2}-1,\kern 3mm \mu=2.
\end{equation}
We now prove that this cannot occur.

\begin{theorem}\label{thmrook}
The $n\times n$ rook graph is not the point graph of a strongly regular configuration.
\end{theorem}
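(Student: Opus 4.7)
The plan is to analyze the possible line structure forced by the rook graph's clique geometry. Observe first that if $R_n$ is the point graph of a strongly regular $(v_k;\lambda,\mu)$ configuration, then since $R_n$ is regular of degree $2(n-1)$, we must have $k(k-1) = 2(n-1)$, so $n = k(k-1)/2 + 1$ and the parameters are forced to be those of \eqref{rookparam}. The configuration lines are then $v = n^2$ distinct $k$-cliques of $R_n$, pairwise intersecting in at most one point.

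The key structural step is to show that every clique of $R_n$ of size at least $3$ lies entirely in a single row or a single column. It is enough to verify this for triangles: given three pairwise adjacent vertices, a short case analysis on which pairs share a row versus a column forces all three to share a common row or column (if two edges share a row, the third edge cannot use a column without collapsing two vertices). Larger cliques then inherit the same property, because any two vertices in them already belong to the triangle of some third clique vertex. Consequently, since $k \ge 3$, every configuration line is contained in exactly one row or one column of $R_n$.

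Let $r_i$ be the number of configuration lines contained in the $i$-th row and $c_j$ the number contained in the $j$-th column, so that $\sum_i r_i + \sum_j c_j = n^2$. Within a fixed row, the $r_i$ lines are $k$-subsets of an $n$-set pairwise sharing at most one point, hence cover disjoint sets of pairs, giving $r_i \binom{k}{2} \le \binom{n}{2}$, that is $r_i \le n(n-1)/(k(k-1)) = n/2$ after substituting $n-1 = k(k-1)/2$. The same bound holds for the columns, so the total $n^2 \le 2n \cdot (n/2) = n^2$ forces equality: $r_i = c_j = n/2$ for every $i,j$, and moreover every pair of points in a row (resp.\ column) must be covered by exactly one of its $k$-subsets. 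Thus each row and each column would carry a Steiner system $S(2,k,n)$. The contradiction now comes from Fisher's inequality, which requires the number of blocks of a non-trivial $2$-design on $n$ points to be at least $n$; here we would need $n/2 \ge n$, impossible for $n \ge 1$. The main (minor) obstacle is the clean verification of the triangle-in-a-row-or-column claim, since a careless case split could miss the forced identifications of coordinates; everything else is a straightforward counting argument.
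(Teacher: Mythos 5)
Your proof is correct and follows essentially the same route as the paper's: lines must lie in single rows or columns, the counting bound $2n\cdot\frac{n(n-1)}{k(k-1)}=n^2$ forces a Steiner $2$-$(n,k,1)$ design on each maximal clique, and Fisher's inequality gives the contradiction. The only cosmetic differences are that you supply the (true) triangle lemma that the paper states as a known property of $R_n$, and you invoke Fisher in the form $b\ge n$ rather than $r\ge k$.
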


\begin{proof}
Lines of the configuration would give a set $\C$ of $v=n^2$ cliques of size $k$
in $R_n$, pairwise intersecting in at most one vertex. A maximal clique of size
$n={k\choose 2}+1$ contains no more than $\frac{n(n-1)}{k(k-1)}$ of the cliques
in~$\C$, because each of the $n(n-1)$ pairs of
distinct vertices is contained in at most one $k$-clique, and a $k$-clique covers
$k(k-1)$ pairs. Therefore, $\C$ is not larger than $2n\cdot \frac{n(n-1)}{k(k-1)}$.
This is equal to $v=n^2$, and therefore the cliques of $\C$ contained in a given
$n$-clique cover every pair of its $n$ vertices exactly once. In this way we get
a Steiner $2$-$(n,k,1)$ design. If $r=\frac{n-1}{k-1}$ is the replication number
of the design, Fisher's inequality $r\ge k$ gives $n-1\ge k(k-1)$, a
contradiction with $n={k\choose 2}+1$.
\end{proof}

Together with the discussion about the Shrikhande graph, this proves that
the strongly regular configuration constructed from $PG(2,5)$ by Theorem~\ref{triangle}
is unique.

\begin{corollary}\label{src16}
There is one $(16_3;2,2)$ configuration up to isomorphism.
\end{corollary}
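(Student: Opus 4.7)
The plan is to combine the classification of strongly regular graphs on $16$ vertices with the non-existence result of Theorem~\ref{thmrook}, and then carry out the clique-search described in the paragraph preceding Proposition~\ref{src13} for the surviving graph.

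First I would invoke the classical fact that, up to isomorphism, there are exactly two graphs with parameters $SRG(16,6,2,2)$: the $4\times 4$ rook graph $R_4$ and the Shrikhande graph. Since the point graph of any $(16_3;2,2)$ configuration must be one of these, and Theorem~\ref{thmrook} rules out $R_4$ (the rook parameters \eqref{rookparam} give $n=4$, $k=3$), the point graph is forced to be the Shrikhande graph~$\Gamma$.

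Next I would enumerate the triangles of $\Gamma$: a direct count (or a \emph{Cliquer} run) shows that $\Gamma$ contains exactly $32$ triangles. Form the clique graph $\C(\Gamma)$ on these $32$ vertices, where two triangles are adjacent iff they meet in at most one vertex. A $(16_3;2,2)$ configuration with point graph $\Gamma$ corresponds precisely to a $16$-clique in $\C(\Gamma)$. I would then run the search for $16$-cliques in $\C(\Gamma)$ exactly as was done for the Paley graph in Proposition~\ref{src13}; I expect to recover two such cliques, interchanged by an automorphism of $\Gamma$, so that they give isomorphic configurations. Existence of at least one such configuration is guaranteed by Theorem~\ref{triangle} applied to $\P = PG(2,5)$.

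The main obstacle is that the final step is, as stated, a computer enumeration: the numerical verification ``$32$ triangles, $2$ maximum cliques in $\C(\Gamma)$'' is the actual content of uniqueness and does not appear to admit an easy structural shortcut. A purely combinatorial proof would have to exploit the full automorphism group $\Aut(\Gamma)$ of order $192$ to show that any system of $16$ pairwise-compatible triangles lies in a single $\Aut(\Gamma)$-orbit, and I expect the case analysis needed for this to be essentially equivalent to the clique search itself; for the write-up I would simply cite the \emph{Cliquer} computation, in parallel with Proposition~\ref{src13}.
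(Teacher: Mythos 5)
Your proposal matches the paper's argument exactly: the paper also uses the classification of $SRG(16,6,2,2)$ graphs into the rook graph $R_4$ (eliminated by Theorem~\ref{thmrook}) and the Shrikhande graph, for which a Cliquer search finds $32$ triangles and two $16$-cliques in $\C(\Gamma)$ yielding isomorphic configurations. The approach and the numerical details are the same, so nothing needs to change.
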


Furthermore, Theorem~\ref{thmrook} eliminates infinitely many feasible parameter sets of
strongly regular configurations.

\begin{corollary}\label{norook}
Strongly regular configurations with parameters~\eqref{rookparam} do not
exist for $k>3$.
\end{corollary}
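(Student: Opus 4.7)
The plan is to reduce the corollary to Theorem~\ref{thmrook} by showing that, under the hypothesis $k>3$, the point graph of any hypothetical configuration with parameters~\eqref{rookparam} is forced to be the rook graph $R_n$ with $n=\binom{k}{2}+1$. The parameters of the associated strongly regular point graph are exactly $SRG(n^2, 2(n-1), n-2, 2)$, which match those of $R_n$. What makes the reduction clean is the classical characterization theorem of Shrikhande: any strongly regular graph with parameters $SRG(n^2,2(n-1),n-2,2)$ is isomorphic to the $n\times n$ rook graph, with the single exception $n=4$, where the Shrikhande graph also occurs.

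First, I would verify that for $k>3$ the relevant value of $n$ avoids the exceptional case. Indeed, $n=\binom{k}{2}+1\ge \binom{4}{2}+1=7$, so $n\ne 4$ and the Shrikhande uniqueness result applies. Hence the point graph of a strongly regular $(v_k;\lambda,\mu)$ configuration with parameters~\eqref{rookparam} must be $R_n$. Theorem~\ref{thmrook} states that $R_n$ is never the point graph of a strongly regular configuration, so no such configuration can exist. This yields the corollary immediately.

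The proof is essentially a bookkeeping argument, with the one substantive ingredient being the citation of Shrikhande's uniqueness theorem; I would reference \cite{BCN89} or \cite{BvM21} for this. The main conceptual obstacle, which has already been dispatched in the preceding Theorem~\ref{thmrook}, is the Fisher-type impossibility for covering pairs in a maximal rook clique by lines of the configuration. Once that is in place, the only thing to check here is that the parameter substitution forces $n$ outside the Shrikhande exception, which is automatic for $k>3$.
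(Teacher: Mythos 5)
Your proposal is correct and follows essentially the same route as the paper: the paper's proof likewise invokes Shrikhande's uniqueness theorem (cited there as \cite{SSS59}) that for $n>4$ the only $SRG(n^2,2(n-1),n-2,2)$ is the $n\times n$ rook graph, and then applies Theorem~\ref{thmrook}. Your explicit check that $k>3$ forces $n=\binom{k}{2}+1\ge 7$, hence past the Shrikhande exception, is the same observation the paper leaves implicit.
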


\begin{proof}
In~\cite{SSS59}, Shrikhande proved that for $n>4$ the only strongly regular
graph with parameters $SRG(n^2,2(n-1),n-2,2)$ is the $n\times n$ rook graph.
\end{proof}

We can eliminate two more parameter sets $(v_k;\lambda,\mu)$ and prove uniqueness for another
computationally, when all $SRG(v,k(k-1),\lambda,\mu)$ graphs are known.

\begin{proposition}\label{non25-4}
Strongly regular $(25_4;5,6)$ configurations do not exist.
\end{proposition}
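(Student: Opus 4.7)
The plan is to argue computationally by reducing the question to a finite case analysis over the known strongly regular graphs with the relevant parameters. Since the point graph of a hypothetical $(25_4;5,6)$ configuration must be a $SRG(25,12,5,6)$ and all such graphs are known (there are fifteen up to isomorphism, see~\cite{AEB}), it suffices to rule out the configuration for each of these fifteen candidates.

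The key reformulation is as follows. The 25 lines of the configuration are $4$-cliques in the point graph~$\Gamma$, pairwise intersecting in at most one vertex, hence pairwise edge-disjoint. Each such $4$-clique contributes $\binom{4}{2}=6$ edges, for a total of $25\cdot 6=150$ edges, which equals $\tfrac{1}{2}\cdot 25\cdot 12$, the number of edges of $\Gamma$. Therefore the 25 lines must form an edge decomposition of $\Gamma$ into $K_4$'s, and conversely any such decomposition yields a $(25_4;5,6)$ configuration, since adjacency in $\Gamma$ coincides with collinearity and the values $\lambda=5$ and $\mu=6$ are inherited from the SRG property. In particular, for every vertex~$v$ the four lines through $v$ must partition its neighborhood $N(v)$ into four vertex-disjoint triangles inside the $5$-regular graph induced on $N(v)$.

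Given this reformulation, I would for each of the fifteen graphs $\Gamma$ enumerate all $4$-cliques, build the clique graph $\C(\Gamma)$ as described before Proposition~\ref{src13}, and run Cliquer~\cite{NO03} to search for cliques of size $25$ in $\C(\Gamma)$; by the edge count, any such clique automatically supplies a $K_4$-decomposition of $\Gamma$. The main obstacle is the potential size of $\C(\Gamma)$, since some of the fifteen graphs carry many $4$-cliques. This is mitigated by the local triangle-partition constraint: one fixes a vertex $v$, enumerates the (relatively few) partitions of $N(v)$ into four triangles, and then backtracks vertex by vertex while maintaining the partial linear space condition, which is precisely the strategy used elsewhere in the paper. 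If the search reports no $25$-clique in any of the fifteen clique graphs, the non-existence follows.
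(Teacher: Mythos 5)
Your proposal is correct and follows essentially the same route as the paper: reduce to the fifteen known $SRG(25,12,5,6)$ graphs, enumerate their $4$-cliques, and use Cliquer to show that no clique graph $\C(\Gamma)$ contains a $25$-clique (the paper reports only $73$ to $90$ $4$-cliques per graph, so the search is small and your extra pruning is unnecessary). Your added observation that the $25$ lines would form an exact $K_4$-decomposition of the edge set of $\Gamma$ is a correct and pleasant reformulation, but it does not change the computation.
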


\begin{proof}
Up to isomorphism, there are exactly $15$ strongly regular graphs with parameters
$SRG(25,12,5,6)$~\cite{AP73, MR73}. The adjacency matrices are available on
E.~Spence's web page~\cite{ES}. Cliquer found from $73$ to $90$ cliques of
size $4$ in these graphs, but none of the corresponding clique graphs
$\C(\Gamma)$ contain a clique of size $25$.
\end{proof}

\begin{proposition}\label{src36}
There is one $(36_5;10,12)$ configuration up to isomorphism.
\end{proposition}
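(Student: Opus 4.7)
The plan is to mirror the computational approach already used in Proposition~\ref{src13}, Corollary~\ref{src16} and Proposition~\ref{non25-4}: enumerate the candidate point graphs, pass to a clique graph, and search for cliques whose size equals~$v$. First I would observe that Theorem~\ref{triangle} applied with $n=7$ produces a strongly regular $(36_5;10,12)$ configuration, since $v=(n-1)^2=36$, $k=n-2=5$, $\lambda=(n-4)^2+1=10$ and $\mu=(n-3)(n-4)=12$. Because $PG(2,7)$ is the unique projective plane of order $7$ and all its triangles lie in a single orbit of the collineation group, this construction yields exactly one configuration up to isomorphism. What is left to show is that every strongly regular $(36_5;10,12)$ configuration arises in this way.

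The point graph of such a configuration is an $SRG(36,20,10,12)$, i.e.\ a pseudo Latin square graph with the parameters of $L_4(6)$; equivalently, its complement is an $SRG(36,15,6,6)$, and a complete catalogue of these graphs is available (for instance on E.~Spence's web page~\cite{ES} or by exhaustive generation). For each graph $\Gamma$ in this catalogue I would use Cliquer~\cite{NO03} to list all $5$-cliques of $\Gamma$, build the clique graph $\C(\Gamma)$ whose vertices are the $5$-cliques and whose edges join pairs that intersect in at most one vertex, and then run Cliquer again to find every clique of size $v=36$ in $\C(\Gamma)$. Each such $36$-clique is a set of lines of a $(36_5;10,12)$ configuration with point graph $\Gamma$; canonical-form computation with nauty~\cite{MP14} is then used to identify isomorphic copies.

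The one step I expect to be a genuine obstacle is the size of the enumeration: the number of non-isomorphic $SRG(36,20,10,12)$ graphs is large, and in each of them the collection of $5$-cliques can be sizeable, so the clique graphs $\C(\Gamma)$ may be too big for a naive search. To tame this I would exploit $\Aut(\Gamma)$ in the usual way — enumerating $5$-cliques only up to the action of $\Aut(\Gamma)$, and prescribing suitable subgroups of $\Aut(\Gamma)$ as the automorphism group of the putative configuration via the submatrix-switching strategy already used in Section~\ref{sec3} and for Proposition~\ref{src50}. Once the search terminates, uniqueness of the $(36_5;10,12)$ configuration follows: at least one exists by Theorem~\ref{triangle}, and the exhaustive computation shows no further isomorphism type appears.
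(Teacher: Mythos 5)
Your plan coincides with the paper's proof: the authors run exactly this exhaustive Cliquer search over all $32\,548$ graphs $SRG(36,20,10,12)$ (accessed as complements of the catalogued $SRG(36,15,6,6)$ graphs), find only $132$ to $336$ cliques of size $5$ in each, and discover that exactly one clique graph $\C(\Gamma)$ — the one whose complement comes from the cyclic Latin square of order $6$ — contains a $36$-clique, yielding the configuration of Theorem~\ref{triangle} for $PG(2,7)$. One caution: the computational obstacle you anticipate does not in fact arise (the $5$-clique counts are small enough for a naive search), and the fallback you propose — prescribing automorphism groups as in Proposition~\ref{src50} — would only yield a lower bound rather than the exhaustive classification required here, so it could not substitute for the full search.
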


\begin{proof}
There are exactly $32\,548$ graphs $SRG(36,20,10,12)$~\cite{MS01}. Adjacency
matrices of the complementary graphs are available on the web page~\cite{ES}.
Using Cliquer, we found that the $SRG(36,20,10,12)$ graphs~$\Gamma$ contain
from $132$ to $336$ cliques of size $5$. Only one of the corresponding clique
graphs $\C(\Gamma)$ contains a clique of size~$36$. This happens when the
complementary graph $\overline{\Gamma}$ with parameters $SRG(36,15,6,6)$ is
the graph constructed from the cyclic Latin square of order~$6$. Two
strongly regular configurations arise, both isomorphic to the configuration
constructed from $PG(2,7)$ by Theorem~\ref{triangle}.
\end{proof}

\begin{proposition}\label{non45-4}
Strongly regular $(45_4;3,3)$ configurations do not exist.
\end{proposition}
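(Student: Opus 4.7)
The plan is to argue by contradiction using the uniqueness of the associated strongly regular graph. The point graph of such a configuration would have parameters $SRG(45,12,3,3)$, which by Brouwer's table~\cite{AEB} is known to be unique up to isomorphism, namely the collinearity graph of the generalized quadrangle $GQ(4,2)$. I would begin by invoking this uniqueness and working inside that specific graph.

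Next I would describe the $4$-cliques. The smallest eigenvalue is $-3$, so Hoffman's bound on clique size is $1+12/3=5$, attained exactly by the $27$ lines of $GQ(4,2)$. Because any three mutually collinear points of a generalized quadrangle lie on a common line---apply the GQ axiom to the third point and the line determined by the other two---every triangle of the point graph is contained in a unique line $5$-clique, and \emph{a fortiori} every $4$-clique is a $4$-subset of such a line. The graph therefore contains exactly $27\cdot\binom{5}{4}=135$ four-cliques, organized into $27$ groups of five by the line in which they sit.

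A hypothetical $(45_4;3,3)$ configuration amounts to a choice of $45$ of these $4$-cliques, pairwise intersecting in at most one point. Since two distinct $4$-subsets of the same line share exactly three points, at most one $4$-clique may be chosen from each line of $GQ(4,2)$. This caps the configuration at $27$ lines, contradicting the required $45$ and completing the proof.

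The main obstacle is really just the reliance on the cited uniqueness of $SRG(45,12,3,3)$; the rest of the argument is purely structural in the GQ. If one wished a self-contained proof, the hard step would be to show directly that every $4$-clique in any $SRG(45,12,3,3)$ extends to a $5$-clique. One could attempt this by exploiting that $\lambda=\mu=3$ forces every pair of vertices to share exactly three common neighbors, and analyzing the induced subgraph on the three common neighbors of a triangle, but the cited uniqueness makes this detour unnecessary.
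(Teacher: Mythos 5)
Your argument contains a genuine gap at its very first step: the graph $SRG(45,12,3,3)$ is \emph{not} unique up to isomorphism. Coolsaet, Degraer and Spence (the paper's reference \cite{CDS06}) classified these graphs and found exactly $78$ of them; only one is the collinearity graph of $GQ(4,2)$. Your structural argument inside $GQ(4,2)$ is correct and attractive --- every triangle lies on a line by the GQ axiom, so every $4$-clique sits inside one of the $27$ line $5$-cliques, two $4$-subsets of the same line meet in three points, and hence at most $27<45$ pairwise nearly-disjoint $4$-cliques can be chosen. But this only rules out the one point graph; the other $77$ graphs are untouched, and for those there is no generalized quadrangle structure to exploit (indeed the paper reports that the number of $4$-cliques ranges from $12$ up to $135$ across the $78$ graphs, so most of them do not even have enough $4$-cliques organized in the way your argument requires).

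The paper's proof is entirely computational: for each of the $78$ graphs it enumerates all $4$-cliques with Cliquer, forms the auxiliary clique graph $\C(\Gamma)$ whose vertices are the $4$-cliques with adjacency meaning intersection in at most one point, and verifies that no $\C(\Gamma)$ contains a clique of size $45$. To repair your proof you would either have to supply a uniform combinatorial argument valid for all $78$ graphs (for instance, showing that too few $4$-cliques exist, or that they cannot be packed suitably), or fall back on the same exhaustive check. Note in passing that in many of these graphs a $4$-clique need not extend to a $5$-clique at all, so the ``self-contained'' route you sketch at the end would also fail.
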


\begin{proof}
There are $78$ graphs $SRG(45,12,3,3)$~\cite{CDS06}.
Adjacency matrices are available on~\cite{ES}. The graphs contain
from $12$ to $135$ cliques of size~$4$ and the corresponding clique
graphs do not contain cliques of size $45$.
\end{proof}

It is also known that graphs with parameters $SRG(50,42,35,36)$ are unique, i.e.\
isomorphic to the complement of the Hoffman-Singleton graph.
This graph has $2\,708\,150$ cliques of size $7$ and we could not classify
all cliques of size $50$ in $\C(\Gamma)$. There may be other $(50_7;35,36)$
configurations apart from the $211$ examples of Proposition~\ref{src50}.

\end{document}